\documentclass[12pt]{amsart}
\usepackage[utf8]{inputenc}
\usepackage[english]{babel}
\usepackage{a4wide}
\usepackage{amssymb}
\usepackage[hidelinks]{hyperref}
\usepackage[dvipsnames]{xcolor}

%% Drawing
\usepackage{tikz}
\tikzstyle{edge}=[shorten <=2pt, shorten >=2pt, >=stealth, line width=1.5pt]
\tikzstyle{vertex}=[circle, fill=white, draw, minimum size=6pt, inner sep=0pt,
                    outer sep=0pt]
\usetikzlibrary{positioning, 
                quotes}

\usepackage[normalem]{ulem}

\theoremstyle{definition}
\newtheorem{definition}{Definition}[section]
\newtheorem{example}{Example}
\newtheorem*{remark*}{Remark}
\newtheorem*{claim*}{Claim}
\theoremstyle{remark}

\theoremstyle{plain}
\newtheorem{theorem}{Theorem}[section]

\newtheorem{prop}[theorem]{Proposition}
\newtheorem{observation}[theorem]{Observation}
\newtheorem{lemma}[theorem]{Lemma} 

\newtheorem{question}[theorem]{Question}

\newtheorem{fact}[theorem]{Fact}

\theoremstyle{definition}
\newtheorem{claim}[theorem]{Claim}

\def\Aut{\mathop{\mathrm{Aut}}\nolimits}

\def\ra#1#2{{#1}_{#2}}
\def\id{\mathrm{id}}
\def\C{\mathbb{C}_3}
\def\oC{\overbar{\C}}

\def\Z{\mathbb Z}
\def\soc{\sigma_{\oC}}
\def\NP{NP}

\def\cyc{\mathbf{C}}

\newcommand{\overbar}[1]{\mkern 1.5mu\overline{\mkern-1.5mu#1\mkern-1.5mu}\mkern 1.5mu}

\def\bA{\mathbf{A}}

\def\fB{\mathfrak{B}}
\DeclareMathOperator {\NSP}{NSP}

\title[Circular Chromatic Numbers, Balanceability, Relation Algebras]{Circular Chromatic Numbers, Balanceability, Relation Algebras, and Network Satisfaction Problems}
\author[M. Bodirsky, S. Guzmán-Pro, M. Jahn, M. Kone\v{c}n\'y, P. Winkler]{Manuel Bodirsky, Santiago Guzmán-Pro, Moritz Jahn, \\ Mat\v{e}j Kone\v{c}n\'y, Paul Winkler}
\date{\today}
\thanks{Manuel Bodirsky, Santiago Guzmán-Pro, Mat\v{e}j Kone\v{c}n\'y, and Paul Winkler received funding from the ERC (Grant Agreement no. 101071674, POCOCOP). Views and opinions expressed are however
those of the authors only and do not necessarily reflect those of the European Union or the European Research
Council Executive Agency.}

\begin{document}
\bibliographystyle{alpha}

\begin{abstract}
In this paper, we characterize graphs with circular chromatic number  less than 3 in terms of
certain balancing labellings studied in the context of signed graphs. In fact, we construct a signed graph 
which is universal for all such labellings of graphs with circular chromatic number less than $3$, and is closely related to 
the generic circular triangle-free graph studied by Bodirsky and Guzm\'an-Pro. Moreover, our universal structure gives
rise to a representation of the relation algebra $\ra{56}{65}$. We then use this representation to show that the network
satisfaction problem described by this relation algebra belongs to NP.  This concludes the full classification of the
existence of a universal square representation, as well as the
complexity of the corresponding network satisfaction problem, for relation algebras with at most four atoms.
\end{abstract}

\maketitle

\section{Introduction}
In a recent paper~\cite{Bodirsky2025RA}, Bodirsky, Jahn, Kn\"auer, Kone\v{c}n\'y, and Winkler study 
relation algebras with at most 4 atoms, their representations, and the corresponding network satisfaction problems (see Section~\ref{sec:ra} for definitions). In a single case out of over a hundred (the relation algebra called $\ra{56}{65}$, see Section~\ref{sec:ra56}), the existence of a universal square representation, and containment of the network satisfaction problem in \NP{} were left open. This was the starting point for the present work. 

Structures connected with the relation algebra $\ra{56}{65}$ can be seen as signed graphs, that is, graphs together with a $\Z_2$-labelling of the edges (see Section~\ref{sec:switching} for definitions). While trying to understand the relation algebra $\ra{56}{65}$ better, we computed some minimal problematic graphs connected to it and, in an unexpected turn of events, they turned out to exactly correspond to the forbidden subgraphs describing the generic circular triangle-free graph $\C$ introduced earlier by Bodirsky and Guzm\'an-Pro~\cite{Bodirsky2024Circular}. This led us to studying special $\Z_2$-labelling of the edges of the complement of the generic circular triangle-free graph from which the desired relation algebra results follow straightforwardly.

Consequently, this paper contains results  both about $\C$ as well as about $\ra{56}{65}$. In particular, we prove (see Sections~\ref{sec:switching} and~\ref{sec:circular} for definitions): 
\begin{theorem}\label{thm:c_balanceable}
A triangle-free graph $G$ embeds into $\C$ if and only if its complement $\overbar{G}$ is anti-even-signable.
\end{theorem}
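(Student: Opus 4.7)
The strategy is to exploit the circular representation of $\C$ from Bodirsky and Guzm\'an-Pro, in which vertices live on $\mathbb{R}/\mathbb{Z}$ and two points are adjacent precisely when their circular distance exceeds $1/3$. Under this representation, an embedding $G \hookrightarrow \C$ is equivalent to a circular coloring $f$ of $G$ that places adjacent vertices at circular distance strictly greater than $1/3$ and non-adjacent vertices at circular distance at most $1/3$; in particular, each edge of $\overbar{G}$ corresponds to a pair lying on a common short arc of length at most $1/3$.

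For the necessity direction ($\Rightarrow$), given such an $f$, I would sign each edge $\{u,v\}$ of $\overbar{G}$ according to whether $f(v)$ lies on the clockwise or counterclockwise short arc from $f(u)$. Around any cycle $v_0 v_1 \cdots v_k = v_0$ in $\overbar{G}$, the signed short-arc increments must sum to an integer. Combining this closure condition with the length bound $\leq 1/3$ per arc, and a careful parity analysis counting how many times the walk wraps around the circle, one extracts a relation between the product of signs and the cycle length that is exactly the anti-even condition recorded in Section~\ref{sec:switching}.

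For the sufficiency direction ($\Leftarrow$), starting from an anti-even signing $\sigma$ of $\overbar{G}$, I would build a circular coloring of $G$ by choosing a base vertex in each connected component of $\overbar{G}$, placing it at $0 \in \mathbb{R}/\mathbb{Z}$, and propagating positions along edges of $\overbar{G}$ by signed increments of a small fixed size prescribed by $\sigma$. The anti-even condition guarantees that the resulting potential is well-defined modulo $1$. The crucial step is verifying that adjacent vertices of $G$ (i.e., non-edges of $\overbar{G}$) receive labels at circular distance greater than $1/3$; here the triangle-freeness of $G$, equivalently the absence of three pairwise non-adjacent vertices in $\overbar{G}$, rules out the configurations that would force two $G$-adjacent vertices to accumulate within a short arc. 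Finally, the resulting coloring witnesses circular chromatic number less than $3$, and together with the universality of $\C$ among triangle-free graphs with this property, upgrades the coloring to an embedding $G \hookrightarrow \C$.

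The main obstacle will be the sufficiency direction, specifically recovering \emph{geometric} long-arc separation between $G$-adjacent pairs from the purely combinatorial anti-even condition. Disconnected components of $\overbar{G}$, where base points can be placed independently, will require either a genericity/perturbation argument or an explicit gluing step so that coincidental proximities between components do not spoil the coloring. I expect the triangle-free hypothesis on $G$ (i.e., $\alpha(\overbar{G}) \leq 2$) to enter exactly at this point, preventing any three $G$-adjacent vertices from competing for the same region of the circle.
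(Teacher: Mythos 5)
Your plan takes a genuinely different route from the paper, which deduces Theorem~\ref{thm:c_balanceable} purely combinatorially from Truemper's theorem (Theorem~\ref{thm:truemper}) together with the forbidden-subgraph description of $\C$ (Theorem~\ref{thm:santiago}), by checking that $W_4=\overbar{2K_2+K_1}$, $W_5=\overbar{C_5+K_1}$, and $\overbar{C_6}$ are exactly the wheels and three-path configurations that are not anti-even-signable yet can occur in a $\overbar{K_3}$-free graph. Both halves of your geometric argument have real gaps. In the necessity direction, the labelling you describe is not well defined: whether $f(v)$ lies on the clockwise or the counterclockwise short arc from $f(u)$ is reversed when $u$ and $v$ are exchanged, so it is not a function on unordered edges. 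Any repair must confront the fact that induced cycles of $\overbar{G}$ genuinely wind around the circle (one can check that every induced $4$- or $5$-cycle of $\oC$ has winding number $\pm 1$, while triangles have winding number $0$ except in the degenerate case of three points pairwise at angular distance exactly $2\pi/3$); identifying the winding parity as the target function $\beta$ does not yet produce a labelling realizing it, and that construction --- the actual content of the theorem --- is what your ``careful parity analysis'' leaves undone.

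The sufficiency direction fails as stated. Take $G=C_5$, so that $\overbar{G}\cong C_5$ is connected and anti-even-signable (label one edge $1$ and the rest $0$). Propagating from a base vertex along $\overbar{G}$-edges by increments of a small fixed size $\varepsilon$ places all five vertices inside an arc of length at most $4\varepsilon$, while every vertex has $G$-neighbours in the same $\overbar{G}$-component that must end up at circular distance greater than $1/3$; in the actual embedding the $\overbar{G}$-increments have size $1/5$ and the walk around $\overbar{G}$ has winding number $\pm1$. The underlying problem is that a $\Z_2$-labelling carries no metric information: the anti-even condition is a parity constraint on cycle sums and cannot force real-valued increments to close up modulo $1$, let alone determine their magnitudes, so no local propagation by ``small'' steps can reconstruct the circular order. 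This is precisely where the paper invokes Truemper's theorem (to pass from anti-even-signability to $\{W_4,W_5,\overbar{C_6}\}$-freeness) and then Theorem~\ref{thm:santiago}; some structural input of this kind appears unavoidable, and your sketch does not supply a substitute.
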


Theorem~\ref{thm:c_balanceable} has the following surprising corollary, which adds yet another characterization of graphs with circular chromatic number less than 3, see also~\cite{Brandt1999, Bodirsky2024Circular}.
\begin{theorem}\label{cor:balanceable}
A graph has circular chromatic number less than 3 if and only if it is a spanning subgraph of a triangle-free anti-even-signable graph.
\end{theorem}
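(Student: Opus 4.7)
The plan is to derive Corollary~\ref{cor:balanceable} from Theorem~\ref{thm:c_balanceable} combined with two auxiliary facts: the characterization of the age of $\C$ by Bodirsky and Guzm\'an-Pro~\cite{Bodirsky2024Circular} (a triangle-free graph $G$ embeds into $\C$ iff $\chi_c(G) < 3$), and the monotonicity of the circular chromatic number under edge deletion.

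For the forward direction, suppose $\chi_c(G) < 3$. Since every triangle has circular chromatic number exactly $3$, $G$ must be triangle-free, so the age characterization gives an embedding $G \hookrightarrow \C$. Theorem~\ref{thm:c_balanceable} then says that $\overbar{G}$ is anti-even-signable. I would then take $H = G$ as the desired triangle-free anti-even-signable supergraph: $G$ is trivially a spanning subgraph of itself and is triangle-free, so all that remains is to transfer anti-even-signability from $\overbar{G}$ to $G$. This should be a matter of unpacking the definitions introduced in Section~\ref{sec:switching}: an anti-even signing is a $\Z_2$-labelling attached to certain pairs of vertices of a graph, and once the precise convention is identified the conditions ``$G$ is anti-even-signable'' and ``$\overbar{G}$ is anti-even-signable'' either coincide (when the signing naturally lives on the complementary edge set) or are interchangeable by a short argument using the universal signed graph constructed in that section.

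For the reverse direction, suppose $G$ is a spanning subgraph of a triangle-free anti-even-signable graph $H$. Since $\chi_c$ is monotone under edge deletion, $\chi_c(G) \le \chi_c(H)$, so it suffices to prove $\chi_c(H) < 3$. Applying the same translation between a graph and its complement, anti-even-signability of $H$ yields anti-even-signability of $\overbar{H}$; since $H$ is triangle-free, Theorem~\ref{thm:c_balanceable} then produces an embedding $H \hookrightarrow \C$, and the age characterization gives $\chi_c(H) < 3$ as desired.

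The main subtlety, and the only nontrivial step, is the apparent asymmetry between Theorem~\ref{thm:c_balanceable}, which refers to anti-even-signability of $\overbar{G}$, and Corollary~\ref{cor:balanceable}, which refers to anti-even-signability of $H$ itself. I expect this to be resolved by a careful reading of the definitions in Section~\ref{sec:switching}: either anti-even-signability is by definition a self-complementary property, or the swap between a graph and its complement is absorbed by a symmetry of the universal signed structure. Once this small bridge is in place, both directions of the corollary are straightforward consequences of Theorem~\ref{thm:c_balanceable}, the age characterization of $\C$, and the monotonicity of $\chi_c$.
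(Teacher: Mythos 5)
There is a genuine gap, and it sits exactly where you placed your two ``auxiliary facts''. First, the age characterization you invoke is not what \cite{Bodirsky2024Circular} (or Theorem~\ref{thm:santiago}) says: it is \emph{not} true that a triangle-free graph embeds into $\C$ if and only if its circular chromatic number is less than $3$. The correct statement is that a graph has circular chromatic number less than $3$ if and only if it admits a \emph{homomorphism} to $\C$, whereas embeddability into $\C$ is governed by the forbidden induced subgraphs $K_3$, $2K_2+K_1$, $C_5+K_1$, $C_6$. The graph $C_6$ separates the two notions: it is triangle-free and bipartite (so its circular chromatic number is $2$), yet it is itself one of the forbidden subgraphs and hence does not embed into $\C$. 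Consequently your forward direction fails at its first step, and the choice $H=G$ cannot be repaired: the phrase ``spanning subgraph of'' in the statement is doing real work, since in general one must \emph{add} edges to $G$ before Theorem~\ref{thm:c_balanceable} applies. The paper's proof handles this by noting that a homomorphism $G\to\C$ can always be made injective and then taking $H$ to be the induced image of $G$ in $\C$; this $H$ contains $G$ as a spanning subgraph, may have strictly more edges, is triangle-free, and is an induced subgraph of $\C$, so Theorem~\ref{thm:c_balanceable} applies to it.

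Second, the ``small bridge'' between anti-even-signability of a graph and of its complement does not exist in either of the forms you hope for. The property is not self-complementary: $C_6$ is anti-even-signable (its only induced cycle is itself; label one edge $1$ and the rest $0$), while $\overbar{C_6}$ is not (Example~\ref{ex:C6-complement}); likewise $K_{2,3}$ is not anti-even-signable although its complement $K_2+K_3$ is. So no general argument can interchange the two conditions. What Theorem~\ref{thm:c_balanceable} actually yields for the induced image $H\subseteq\C$ is that $\overbar{H}$ is anti-even-signable, and that is the form in which the conclusion should be read (the paper's own proof of Theorem~\ref{cor:balanceable} produces exactly this; the literal wording of the statement elides the complement). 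You correctly flagged the graph-versus-complement asymmetry as the main subtlety, but deferring it to ``a careful reading of the definitions'' is not a proof, and the hoped-for identification is simply false. Your reverse direction has the right skeleton --- monotonicity of the circular chromatic number under edge deletion, then Theorem~\ref{thm:c_balanceable} and the homomorphism characterization --- but it leans on the same nonexistent bridge, so it too needs to be restated in terms of $\overbar{H}$.
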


In other words, Theorem~\ref{cor:balanceable} says that triangle-free anti-even-signable graphs are exactly the edge-maximal 
graphs with circular chromatic number less than 3. We then refine Theorem~\ref{thm:c_balanceable} and obtain a particularly nice labelling of $\oC$.

\begin{theorem}\label{thm:main}
There is an anti-even-balancing labelling $\soc$ of $\oC$ such that for every finite $\overbar{K_3}$-free graph $G$ with an anti-even-balancing labelling $\sigma_G$ there is a label-preserving embedding $f\colon (G,\sigma_G)\to (\oC,\soc)$.
\end{theorem}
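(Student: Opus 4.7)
The plan is to realize $(\oC,\soc)$ as a Fra\"iss\'e limit of labelled graphs. Let $\F$ denote the class of finite $\overbar{K_3}$-free graphs endowed with an anti-even-balancing labelling, with morphisms taken to be label-preserving embeddings. I will show that $\F$ is a Fra\"iss\'e class, let $(M,\sigma_M)$ be its Fra\"iss\'e limit, identify the graph reduct $M$ with $\oC$, and set $\soc:=\sigma_M$ under that identification; the universality asserted by the theorem will then follow directly from the defining universal property of the Fra\"iss\'e limit.

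The central step is the amalgamation property for $\F$. Given $(A,\sigma_A)\hookrightarrow(B,\sigma_B)$ and $(A,\sigma_A)\hookrightarrow(C,\sigma_C)$ in $\F$, I construct an amalgam in two stages. At the graph level I take the \emph{full} amalgam $D$: identify the copies of $A$ inside $B$ and $C$, and add every edge between $V(B)\setminus V(A)$ and $V(C)\setminus V(A)$. The resulting $D$ remains $\overbar{K_3}$-free since any independent triple, being forbidden from crossing the added biclique, must lie entirely inside $B$ or inside $C$. I then extend $\sigma_B\cup\sigma_C$ to a labelling $\sigma_D$ of $D$ by choosing labels on the new edges so that anti-even-balance continues to hold. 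This reduces to a consistency check in the cycle space of $D$: the fundamental cycles of $D$ not already present in $B$ or $C$ are generated by cycles using the added bipartite edges, and the anti-even-balance constraints they impose form a $\Z_2$-linear (or affine) system which I show is consistent by fixing labels along a spanning tree of the new edges and verifying the remaining cycle equations using the balance of $(B,\sigma_B)$ and $(C,\sigma_C)$ across $A$.

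With amalgamation in hand, $\F$ is a Fra\"iss\'e class (heredity, joint embedding, and countably many iso-types of each finite size are routine), so its Fra\"iss\'e limit $(M,\sigma_M)$ exists and is homogeneous and universal for label-preserving embeddings. To identify $M$ with $\oC$ as graphs, note that the age of $M$ as a graph equals the class of finite $\overbar{K_3}$-free graphs that admit some anti-even-balancing labelling, which by Theorem~\ref{thm:c_balanceable} is exactly the age of $\oC$. It remains to verify that $M$ is homogeneous as a graph. I do this via the one-point extension property for its graph reduct: given a finite $P\subseteq M$ and a graph-extension $P+v$ in the age, I extend $\sigma_M\restriction P$ to an anti-even-balancing labelling $\tau$ of $P+v$ (by the cycle-space argument of the previous step), and then use the label-preserving universality of $(M,\sigma_M)$ to realize $\tau$ above $\sigma_M\restriction P$, supplying the required vertex. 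Combined with the age identification, this yields $M\cong\oC$ by uniqueness of countable homogeneous structures with a given age.

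The main obstacle will be the label-extension part of amalgamation: verifying that the full graph amalgam always admits an anti-even-balancing labelling extending the given ones on $B$ and $C$. This requires a careful analysis of the cycle space of $D$, since cycles through the newly added bipartite edges interact non-trivially with the cycle conditions already encoded in $\sigma_B$ and $\sigma_C$; once established, the identification $M\cong\oC$ and the universality of $\soc$ follow from standard Fra\"iss\'e theory together with Theorem~\ref{thm:c_balanceable}.
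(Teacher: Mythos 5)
Your Fra\"iss\'e strategy breaks at the amalgamation step, and the failure is not a technicality that a more careful cycle-space analysis can repair. First, the \emph{full} amalgam is simply wrong for this class: write $\overbar{C_6}$ as two triangles $x_1x_2x_3$ and $y_1y_2y_3$ joined by the matching $x_1y_1,x_2y_2,x_3y_3$, and let $A$ be the $4$-cycle induced on $\{x_2,x_3,y_3,y_2\}$, $B:=\overbar{C_6}-y_1$, $C:=\overbar{C_6}-x_1$. Both $B$ and $C$ are $5$-vertex, $\overbar{K_3}$-free, $\{W_4,W_5,\overbar{C_6}\}$-free graphs, so by Theorem~\ref{thm:c_balanceable} they carry anti-even-balancing labellings, and one can choose them to agree on $A$ (the only induced cycle of $B$ or $C$ other than $A$ is a triangle). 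Your full amalgam adds the single cross edge $x_1y_1$ and produces $\overbar{C_6}$ itself, which by Example~\ref{ex:C6-complement} admits \emph{no} anti-even-balancing labelling whatsoever; so the $\Z_2$-linear system you propose to solve is inconsistent. The point you are missing is that $\overbar{K_3}$-freeness is not the only constraint hidden in the class $\F$: by Truemper's theorem the class also excludes $W_4$, $W_5$ and $\overbar{C_6}$, and the full amalgam can create all of these.

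The deeper problem is that $\F$ cannot have the amalgamation property at all, so no choice of amalgam rescues the plan. Indeed, your own (correct) one-point-extension argument shows that if $\F$ were a Fra\"iss\'e class, the graph reduct $M$ of its limit would be a countable \emph{homogeneous} graph whose age is, by Theorem~\ref{thm:c_balanceable}, exactly the age of $\oC$; but by the Lachlan--Woodrow classification no countable homogeneous graph has this age (its complement class contains $C_5$ and $C_4$ but not $C_6$, ruling out every graph on the list). For the same reason your final step fails even in principle: $\oC$ is not homogeneous --- this is precisely why Bodirsky and Guzm\'an-Pro must homogenize $\C$ with a separation relation --- so ``uniqueness of countable homogeneous structures with a given age'' could never identify $M$ with $\oC$, which is the structure the theorem is about. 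The paper avoids all of this: it first embeds $G$ into $\oC$ using Theorem~\ref{thm:c_balanceable}, then invokes Zaslavsky's switching theorem (Theorem~\ref{thm:switching_equivalent}) to find $S\subseteq V(G)$ with $\sigma_G^S=\soc|_{E(g(G))}$, and finally realizes that switch \emph{inside} $\oC$ by perturbing the vertices of $S$ into the other dense class $C_{1-i(x)}$ (Lemma~\ref{lem:cswitching}), using the explicit labelling of Definition~\ref{defn:sigmac}. If you want to salvage a model-theoretic argument, you would have to work in the homogenized language, not with plain (signed) graphs.
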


In turn, Theorem~\ref{thm:main} is the key ingredient for our results on the relation algebra side, where we prove the following theorem, thereby answering Question~4.29 from~\cite{Bodirsky2025RA}.

\begin{theorem}\label{thm:main_ra}
The relation algebra $\ra{56}{65}$ has a finitely bounded universal square representation. Consequently, the network satisfaction problem for the relation algebra $\ra{56}{65}$ is in \NP{}.
\end{theorem}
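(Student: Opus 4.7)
The plan is to derive Theorem~\ref{thm:main_ra} from Theorem~\ref{thm:main} by lifting the labelled structure $(\oC,\soc)$ to a representation of $\ra{56}{65}$ and then showing that this representation is both universal and finitely bounded. First I would describe how a signed graph whose underlying graph is $\overbar{K_3}$-free naturally encodes a structure over the signature of a four-atom relation algebra: one atom for equality, and three atoms partitioning the ordered pairs of distinct elements according to whether the pair is an edge or a non-edge of the underlying graph and, in the edge case, according to its sign. The correspondence between $\ra{56}{65}$-structures and signed graphs claimed in the introduction should precisely match this encoding, so that the forbidden triangles of $\ra{56}{65}$ correspond to triangles in the underlying graph together with the local parity conditions on signs. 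Verifying that $(\oC,\soc)$ under this interpretation satisfies the composition laws of $\ra{56}{65}$ therefore reduces to a triangle-by-triangle case analysis in the signed graph.

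For universality I would argue as follows: a finite network of $\ra{56}{65}$-atoms is consistent exactly when, under the above correspondence, it is a finite $\overbar{K_3}$-free graph equipped with an anti-even-balancing labelling. Theorem~\ref{thm:main} then produces a label-preserving embedding of any such signed graph into $(\oC,\soc)$, and this embedding preserves all four atomic relations by construction. Combined with the representation check of the previous paragraph, this yields that $(\oC,\soc)$ is a universal square representation of $\ra{56}{65}$.

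Finite boundedness of the age of $(\oC,\soc)$, and the resulting membership in \NP{}, form the last step. The age of $\C$ is finitely axiomatized by the forbidden subgraphs from~\cite{Bodirsky2024Circular}, so it suffices to add finitely many forbidden substructures that guarantee the labelling is anti-even-balancing. Once finite boundedness is established, \NP{} membership follows by the standard argument for finitely bounded universal square representations: non-deterministically refine the input network to a complete atomic network, verify in polynomial time that the resulting signed graph avoids the finite list of forbidden substructures, and invoke universality to conclude that such a refinement exists exactly when the input is satisfiable.

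The main obstacle is finite boundedness. The anti-even-balancing condition is \emph{a priori} global, being stated in terms of cycles of unbounded length, so the crux of the proof is to show that, once the sign function is explicitly part of the signature, this condition becomes local and is captured by the exclusion of finitely many small signed substructures. Exploiting the particular labelling $\soc$ provided by Theorem~\ref{thm:main} — rather than an arbitrary anti-even-balancing labelling — should make this reduction possible, but it is the step that needs the most care; the rest of the argument is essentially bookkeeping on top of Theorem~\ref{thm:main} and the combinatorial description of $\C$.
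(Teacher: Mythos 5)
Your overall architecture matches the paper's: encode $(\oC,\soc)$ as a structure in the signature of $\ra{56}{65}$, verify that it is a square representation, use Theorem~\ref{thm:main} for universality, and handle finite boundedness separately. However, there is a genuine gap at the heart of your universality argument. You claim that a finite atomic network is \emph{consistent} exactly when the corresponding signed graph is $\overbar{K_3}$-free with an anti-even-balancing labelling. Consistency is a condition on triples only: it forbids $\overbar{K_3}$ and forces every triangle to have sign $0$, but it says nothing about induced cycles of length $4$ or $5$. For example, an induced $4$-cycle with all edges labelled $0$ is a consistent network (it contains no triangle and no $\overbar{K_3}$), yet its labelling is not anti-even-balancing and the network is in fact unsatisfiable; taken literally, your argument would certify it as satisfiable. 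What universality actually requires is the implication ``satisfiable $\Rightarrow$ anti-even-balancing'', and this is a real theorem (Proposition~\ref{prop:56_balancing}): one places the satisfiable network inside the signed graph arising from an arbitrary representation and uses the $3$-extension property of that representation to produce auxiliary vertices forcing every induced $4$- and $5$-cycle to have sign $1$. This step cannot be skipped. A related, smaller omission: verifying that $(\oC,\soc)$ is a representation is not just a matter of checking that no forbidden triangle occurs; axiom~(\ref{ax:repr:7}) of Definition~\ref{defn:repr} also demands that every allowed triple be \emph{witnessed}, which is exactly the $3$-extension property of Lemma~\ref{lem:extension}.

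On finite boundedness you correctly identify the concern --- anti-even-balancing is a priori a condition on cycles of unbounded length --- but you point at the wrong resolution. The specific labelling $\soc$ plays no role here; the mechanism is simply that the underlying graphs in the age are $\overbar{K_3}$-free and hence contain no induced cycle of length at least $6$, so anti-even-balancing is equivalent to excluding finitely many badly signed cycles of lengths $3$, $4$, and $5$, on top of the finite bounds for $\C$ from~\cite{Bodirsky2024Circular} and finitely many structures enforcing the Boolean axioms of a representation. Your reduction of the input to a consistent atomic network with $\id$ interpreted as equality, and the concluding appeal to the standard guess-and-check argument for \NP{} membership, are fine.
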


\section{Balancing labellings and switching}\label{sec:switching}
We follow standard terminology. In particular, a \emph{graph} $G$ is a set $V(G)$ of \emph{vertices} together with a set $E(G) \subseteq {{V(G)} \choose 2}$ of \emph{edges} (following standard notation, we will write $x_1x_2$ instead of $\{x_1,x_2\}$). All graphs in this paper will be finite or countably infinite. Given graphs $G$ and $H$, we say that $H$ is a \emph{subgraph of $G$} and write $H\subseteq G$ if $V(H) \subseteq V(G)$ and $E(H) \subseteq E(G)$. If $H\subseteq G$ and $E(H) = E(G) \cap {V(H) \choose 2}$, we say that $H$ is an \emph{induced subgraph of $G$}. 
The \emph{disjoint union} of $G$ and $H$ is the graph $G + H := (V(G) \cup V(H), E(G) \cup E(H))$ where
$V(G)\cap V(H) = \varnothing = E(H)\cap E(G)$. We say that a vertex $v$ is \emph{universal} in $G$ if
$v$ is adjacent to every vertex $u$ in $V(G)\setminus \{v\}$.

A \emph{signed graph} is a pair $(G,\sigma)$ where $G$ is a graph and $\sigma$ is a function $E(G) \to \Z_2$, which is called a \emph{labelling of $G$}. Signed graphs have been introduced by Harary in 1953~\cite{Harary1953} and have connections to other areas of mathematics and its applications (e.g. knot theory, sociology, chemistry, or statistical physics, see for example~\cite{Zaslavsky2018}).\footnote{It is more common in the area to consider labelling functions to the set $\{+,-\}$ (with the multiplication operation). We decided to follow~\cite{Conforti2000} and use the additive variant.}

Given a subgraph $H\subseteq G$, we define $\sigma(H) = \sum_{e\in E(H)} \sigma(e)$ (with addition in $\Z_2$). Thus, if $\mathcal H$ is some set of subgraphs of $G$, we can talk about the restriction $\sigma|_{\mathcal H} \colon \mathcal H \to \Z_2$ (and for this notation we consider a subgraph to be the set of its edges).
Denote by $\cyc(G)$ the collection of all induced cycles of $G$ (that is, induced subgraphs of $G$ which are cycles). If $\beta\colon \cyc(G) \to \Z_2$ is a function, we say that a labelling $\sigma$ of $G$ is \emph{$\beta$-balancing} if $\sigma|_{\cyc(G)} = \beta$, and we say that a graph $G$ is \emph{$\beta$-balanceable} if there exists a $\beta$-balancing labelling of $G$.

A graph $G$ is called \emph{even-signable} if it is $\beta$-balanceable for the function
$\beta \colon \cyc(G) \to {\mathbb Z}_2$
given by $\beta(C) := 1$ if  $\lvert C\rvert = 3$
and $\beta(C) := 0$ otherwise.
Even-signable graphs generalise graphs with no odd holes
(i.e., induced cycles of odd length at least 4) and have been studied by Conforti, Cornu{\'e}jols,
Kapoor, and Vu{\v{s}}kovi{\'{c}}~\cite{Conforti1995}, with the name even-signable first
used in~\cite{Conforti2000}. Similarly, a graph $G$ is called \emph{odd-signable} if it
is $\beta$-balanceable for $\beta$ being constant 1. Odd-signable graphs generalise
even-hole-free graphs, and Chudnovsky, Kawarabayashi, and Seymour~\cite{Chudnovsky2005}
provided a polynomial-time recognition algorithm for them.

In this paper we will be interested in a related variant which we call \emph{anti-even-signable} graphs. These are $\beta$-balanceable graphs for the function $\beta \colon \cyc(G) \to {\mathbb Z}_2$ given by $\beta(C) := 0$ if $\lvert C\rvert = 3$ and $\beta(C) := 1$ otherwise. A $\beta$-balancing labelling $\sigma$ of $G$ is then called \emph{anti-even-balancing}.

\medskip

In 1982, Truemper proved Theorem~\ref{thm:truemper} below;
we are following the presentation from an alternative proof of Truemper's theorem by Conforti, Gerards, and Kapoor~\cite{Conforti2000}.
For stating the theorem, we need the following definitions. A \emph{wheel} is an induced cycle $C$ plus a vertex connected to at least three elements of $C$.

A \emph{three path configuration} is a graph $P$ for which there are paths $P_1$, $P_2$, and $P_3$ with endpoints $x_1$, $y_1$, $x_2$, $y_2$, and $x_3$, $y_3$, respectively, such that the following is true:
\begin{itemize}
    \item $V(P) = V(P_1) \cup V(P_2) \cup V(P_3)$,
    \item for every $i\neq j\in \{1,2,3\}$ it holds that $V(P_i)\cap V(P_j) \subseteq \{x_i,x_j,y_i,y_j\}$,
    \item $E(P) = E(P_1) \cup E(P_2) \cup E(P_3) \cup E$ for some set $E$, and
    \item exactly one of the following is true:
    \begin{enumerate}
        \item $x_1 = x_2 = x_3$, $y_1 = y_2 = y_3$, $\lvert V(P_i)\rvert \geq 3$ for every $i\in \{1,2,3\}$, and $E = \emptyset$.
        \item $y_1 = y_2 = y_3$, $E = \{x_1x_2,x_1x_3,x_2x_3\}$, and $V(P_i) \cap V(P_j) = \{y_1\}$ for all distinct $i,j \in \{1,2,3\}$.
        \item $V(P_i) \cap V(P_j) = \emptyset$ for all distinct $i,j\in \{1,2,3\}$ and $$E = \{x_1x_2,x_1x_3,x_2x_3,y_1y_2,y_1y_3,y_2y_3\}.$$
    \end{enumerate}
\end{itemize}

\begin{theorem}[Truemper~\cite{Truemper1982}]\label{thm:truemper}
Let $G$ be a graph and $\beta \colon \cyc(G) \to {\mathbb Z}_2$ be a function.
Then $G$ is $\beta$-balanceable if and only if every induced subgraph $H\subseteq G$ isomorphic to a 3-path configuration or to a wheel is $\beta|_{\cyc(H)}$-balanceable.
\end{theorem}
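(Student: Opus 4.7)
The easy direction is a restriction argument: if $\sigma$ is a $\beta$-balancing labelling of $G$ and $H\subseteq G$ is an induced subgraph, then $\sigma|_{E(H)}$ is $\beta|_{\cyc(H)}$-balancing, because every induced cycle of $H$ is also an induced cycle of $G$. For the converse, I would first treat finite $G$ and then deduce the countably infinite case by compactness.

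In the finite case, I would view the problem as a $\Z_2$-linear system in the edge variables $\sigma(e)$, with one equation $\sigma(C)=\beta(C)$ per induced cycle $C$. By the Fredholm alternative over $\Z_2$, this system is solvable if and only if every linear dependency among the edge-indicators $\mathbb{1}_C$ of induced cycles respects $\beta$, i.e.\ every family $\mathcal{C}\subseteq\cyc(G)$ with $\sum_{C\in\mathcal{C}}\mathbb{1}_C=0$ satisfies $\sum_{C\in\mathcal{C}}\beta(C)=0$. Thus the task reduces to exhibiting, inside any inconsistent family $\mathcal{C}$, an induced $3$-path configuration or wheel of $G$ carrying its own inconsistency. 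I would argue by induction on $|V(G)|+|E(G)|$, fixing a minimum counterexample. The basic tool is \emph{vertex switching}: replacing $\sigma(e)$ by $\sigma(e)+1$ on every edge incident to a fixed vertex $v$ leaves $\sigma(C)$ unchanged for every cycle $C$ (since $C$ contains an even number of edges at $v$), so $\beta$-balanceability is an invariant of the switching orbit, and one may normalize $\sigma$ to vanish on any chosen spanning forest. Combined with the inductive hypothesis, switching lets one glue compatible balancing labellings of strictly smaller induced subgraphs across separators of small size, so that a minimum counterexample has no useful splitting structure. The principal obstacle, which I would address along the lines of Conforti, Gerards, and Kapoor~\cite{Conforti2000}, is a delicate case analysis showing that every such irreducible minimum counterexample must itself be, or must contain as an induced subgraph, a wheel or one of the three types of $3$-path configurations listed in the statement, thereby contradicting the hypothesis on $G$.

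For countably infinite $G$, I would finish by compactness. The space $\Z_2^{E(G)}$ is compact in the product topology, and for each induced cycle $C$ the set of labellings $\sigma$ with $\sigma(C)=\beta(C)$ is clopen. Hence the set of $\beta$-balancing labellings of $G$ is an intersection of closed sets, and by the finite intersection property it is non-empty provided every finite induced subgraph $H\subseteq G$ is $\beta|_{\cyc(H)}$-balanceable; the latter follows from the finite case, since every induced $3$-path configuration or wheel in $H$ is also such a subgraph in $G$, hence also satisfies the hypothesis.
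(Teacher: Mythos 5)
This theorem is not proved in the paper at all: it is Truemper's theorem, stated with a citation to~\cite{Truemper1982} and to the alternative proof of Conforti, Gerards, and Kapoor~\cite{Conforti2000}, so there is no in-paper argument to compare against. Judged on its own terms, your write-up gets the routine parts right: the forward direction by restriction (induced cycles of an induced subgraph are induced cycles of $G$), the reformulation of balanceability as solvability of a $\Z_2$-linear system and its dual criterion in terms of dependencies among cycle indicators, the observation that switching preserves $\sigma(C)$ for every cycle and hence lets one normalize on a spanning forest, and the compactness reduction of the countable case to the finite case. All of these steps are correct as stated.

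However, the entire mathematical content of the theorem lies in the step you describe as ``a delicate case analysis showing that every such irreducible minimum counterexample must itself be, or must contain as an induced subgraph, a wheel or one of the three types of $3$-path configurations,'' which you explicitly defer to~\cite{Conforti2000}. That is not a proof of the step; it is a citation of the known proof. Nothing in your sketch explains how an inconsistent family of induced cycles is localized inside a single induced wheel or $3$-path configuration, nor how the ``gluing across small separators'' interacts with inducedness (a cycle induced in a piece of a decomposition need not be induced in $G$, and the clique-cutset/decomposition analysis in~\cite{Conforti2000} exists precisely to control this). So the proposal is a correct frame around a missing core. Since the paper itself only cites the theorem, the honest conclusion is that your attempt reproduces the paper's treatment (an appeal to Truemper and to Conforti--Gerards--Kapoor) dressed in a reduction that, while sound, does none of the hard work.
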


\begin{example}\label{ex:C6-complement}
 One of the simplest three path configurations is $\overbar{C_6}$ (the complement of $C_6$, see Figure~\ref{fig:c6}).
 We argue that $\overbar{C_6}$ is not anti-even-signable, and so, according to Theorem~\ref{thm:truemper},
 every anti-even-signable graph $G$ is $\overbar{C_6}$-free. Proceeding by contradiction, suppose there
 is such a labelling and let $x,y,z,a,b,c,e,f,g\in\mathbb Z_2$ denote the labels; see Figure~\ref{fig:c6}.
 Since the labels of the top triangle must add up
 to zero, at least one of these values must be zero. Up to symmetries of $\overline{C_6}$, we may assume
 that $z = 0$, and so $x = y$. Since all $4$-cycles must add up to $1$, we obtain the following
 equations 
    \[
    a + b+ e = 1-x = 1-y = b + c + f \text{ and so, } a + e -f = c.
    \]
    Using again that triangles add up to zero we know that $e + g = -f$, and substituting
    in the last equation above, we get that $a + 2e + g = c$ which yields
    $a + g = c$. Finally, this implies that $a + g + c + z = 2c + z = 0$, contradicting
    that all $4$-cycles add up to $1$.
    \begin{figure}
    \begin{tikzpicture}[every edge quotes/.style = {auto, font=\footnotesize, inner sep=1.2pt},scale = 0.8]
    \begin{scope}[xshift=8cm]
    \node [vertex] (2) at (0,1) {};
    \node [vertex] (1) at (-1.25,1.7) {};
    \node [vertex] (3) at (1.25,1.7) {};

    \node [vertex] (22) at (0,-1) {};
    \node [vertex] (11) at (-1.25,-1.7) {};
    \node [vertex] (33) at (1.25,-1.7) {};
    
   \draw   (2)  edge["$x$"] (1)
        (3)  edge["$y$"] (2)
        (1)  edge["$z$"] (3)
        (1)  edge["$a$"] (11)
        (2)  edge["$b$"] (22)
        (3)  edge["$c$"] (33)       
        (11)  edge["$e$"] (22)
        (22)  edge["$f$"] (33)
        (33)  edge["$g$"] (11);
  \end{scope}
  \end{tikzpicture}
        \label{fig:c6}
  \caption{The three path configuration $\overbar{C_6}$}
  \end{figure}
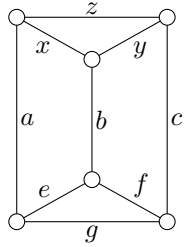

\end{example}
Next, we introduce a key concept in the area of signed graphs which will also play an important role in this paper.
\begin{definition}
Given a graph $G$ with a labelling $\sigma$ and a set $S\subseteq V(G)$, the \emph{switch of $\sigma$ over $S$}
is the labelling $\sigma^S$ defined by $\sigma^S(e) := \sigma(e)+1$ if $\lvert S\cap e\rvert = 1$ and
$\sigma^S(e) := \sigma(e)$ otherwise. (In other words, we switch the labels of all edges with exactly one endpoint in $S$.) We also write $\sigma^v$ for $\sigma^{\{v\}}$.
\end{definition}
\begin{observation}\label{obs:switch_commutative} 
For every graph $G$ with a labelling $\sigma$ and a pair of subsets $S,T\subseteq V(G)$ it holds that
$$\left(\sigma^S\right)^T = \left(\sigma^T\right)^S = \sigma^{S\triangle T},$$
where $S\triangle T$ is the symmetric difference of $S$ and $T$.\qed
\end{observation}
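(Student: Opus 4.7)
The plan is to prove the identity edge by edge. For any subset $X\subseteq V(G)$ and edge $e=uv$, note that $|X\cap e|$ is odd precisely when exactly one of $\chi_X(u),\chi_X(v)$ equals $1$, where $\chi_X$ denotes the characteristic function of $X$. So if we set $\varepsilon_X(e):=\chi_X(u)+\chi_X(v)\pmod 2$, then the definition of switching can be rephrased as
$$\sigma^X(e)=\sigma(e)+\varepsilon_X(e)\quad \text{in } \Z_2,$$
for every edge $e$ of $G$.

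The key algebraic fact I would use is that $\chi_{S\triangle T}(v)=\chi_S(v)+\chi_T(v)\pmod 2$ for every vertex $v$, which is the standard description of symmetric difference in $\Z_2$. Summing over the two endpoints of $e$ gives $\varepsilon_{S\triangle T}(e)=\varepsilon_S(e)+\varepsilon_T(e)$.

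With this in hand the observation is immediate: applying the formula for switching twice yields
$$(\sigma^S)^T(e)=\sigma^S(e)+\varepsilon_T(e)=\sigma(e)+\varepsilon_S(e)+\varepsilon_T(e)=\sigma(e)+\varepsilon_{S\triangle T}(e)=\sigma^{S\triangle T}(e),$$
and the analogous computation with the roles of $S$ and $T$ interchanged shows $(\sigma^T)^S(e)=\sigma^{S\triangle T}(e)$ as well. Since this holds for every edge $e$, the three labellings agree as functions on $E(G)$. There is no real obstacle here; the only thing to be careful about is to set up the $\varepsilon_X$ notation cleanly so that the commutativity of addition in $\Z_2$ does all the work.
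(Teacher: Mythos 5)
Your argument is correct and is exactly the computation the paper has in mind when it states this observation without proof (the \qed appears directly in the statement): rewriting the switch as addition of the edge-indicator $\varepsilon_X(e)=\chi_X(u)+\chi_X(v)$ in $\Z_2$ and using $\chi_{S\triangle T}=\chi_S+\chi_T$ reduces everything to commutativity of addition. Nothing is missing.
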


\begin{definition}
Given a graph $G$ with labellings $\sigma$ and $\sigma'$, we say that $\sigma$ and $\sigma'$ are \emph{switching equivalent} if there is some $S\subseteq V(G)$ with $\sigma' = \sigma^S$.
\end{definition}

We will need the following well-known results:\footnote{Technically, \cite{zaslavskyDAM4} considers also signs of non-induced cycles, but it is easy to see that these are uniquely determined by $\cyc(G)$.}
\begin{lemma}[Lemma~3.1 in~\cite{zaslavskyDAM4}]\label{lem:tree_unique_extension}
Let $G$ be a graph, let $\beta\colon \cyc(G) \to \Z_2$ be some function such that $G$ is $\beta$-balanceable, and let $T$ be a maximal acyclic (not necessarily induced) subgraph of $G$. Then for every labelling $\sigma_T \colon E(T) \to \Z_2$ there is a unique $\beta$-balancing labelling $\sigma$ of $G$ such that $\sigma_T = \sigma|_{E(T)}$.
\end{lemma}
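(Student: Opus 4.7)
The plan is to establish existence via a switching argument and uniqueness by showing that, although $\beta$ constrains only induced cycles, any $\beta$-balancing labelling is in fact pinned down on all cycles. First I would observe that $T$ is necessarily a spanning forest of $G$: if some vertex $v \in V(G)$ were missing from $V(T)$, adding $v$ as an isolated vertex would keep $T$ acyclic, contradicting maximality. Hence $T$ restricts to a spanning tree on each connected component of $G$.

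For existence, I would pick any $\beta$-balancing labelling $\sigma_0$ of $G$, which exists by hypothesis. A short computation shows that for any $S\subseteq V(G)$ and any cycle $C$ of $G$ one has $\sigma_0^S(C) = \sigma_0(C)$ in $\Z_2$, since each vertex of $V(C)\cap S$ flips the labels of exactly two edges of $C$. Thus every switch of $\sigma_0$ is again $\beta$-balancing, and it suffices to find $S\subseteq V(G)$ with $\sigma_0^S|_{E(T)} = \sigma_T$, equivalently,
\[
\mathbf{1}_S(u)+\mathbf{1}_S(v) \;=\; \sigma_T(uv)+\sigma_0(uv)\quad\text{for every } uv\in E(T).
\]
Since $T$ is a forest, this linear system over $\Z_2$ is solvable by choosing $\mathbf{1}_S$ freely on one vertex of each tree component and propagating the constraints along the edges of $T$.

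For uniqueness, I would take two $\beta$-balancing labellings $\sigma_1,\sigma_2$ agreeing with $\sigma_T$ on $E(T)$ and set $\tau := \sigma_1+\sigma_2$, so that $\tau$ vanishes on $E(T)$ and on every element of $\cyc(G)$. The central step is to prove that $\tau(C)=0$ for every (not necessarily induced) cycle $C$ of $G$, by induction on $\lvert C\rvert$: if $C$ has a chord $e$, then $e$ splits $C$ into two strictly shorter cycles $C_1,C_2$ with $\tau(C) = \tau(C_1)+\tau(C_2)$ (the contribution of $e$ appears twice and cancels), both zero by induction; otherwise $C$ is induced and $\tau(C)=0$ by assumption. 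Given this, for any $e = uv \in E(G)\setminus E(T)$ the maximality of $T$ produces a cycle $C\subseteq T+e$ with $E(C)\setminus\{e\} \subseteq E(T)$, whence $\tau(e) = \tau(C) = 0$; combined with $\tau|_{E(T)}=0$ this gives $\sigma_1 = \sigma_2$.

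The main obstacle is the cycle-chord induction, which is precisely what transfers the balancing data from induced cycles to arbitrary cycles (this is the point the paper flags in the footnote to the cited lemma). Once that step is in hand, both halves of the statement reduce to elementary manipulations on the spanning forest $T$: existence becomes solving a $\Z_2$-system on a forest, and uniqueness becomes the observation that every non-$T$ edge lies on a unique fundamental cycle whose other edges belong to $T$.
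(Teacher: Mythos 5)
Your proof is correct. The paper does not prove this lemma at all---it is quoted verbatim from Zaslavsky---so there is nothing to compare against except the cited source; your argument (switching to adjust the labelling on the spanning forest for existence, and fundamental cycles plus the chord-splitting induction for uniqueness) is the standard one and is complete, with the chord induction correctly supplying the point flagged in the paper's footnote, namely that the signs of non-induced cycles are determined by those of the induced cycles in $\cyc(G)$.
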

\begin{theorem}[Proposition 3.2 in~\cite{zaslavskyDAM4}]\label{thm:switching_equivalent}
Let $G$ be a graph and let $\sigma$ and $\sigma'$ be labellings of $G$. Then $\sigma$ and $\sigma'$ are switching equivalent if and only if $\sigma|_{\cyc(G)} = \sigma'|_{\cyc(G)}$.
\end{theorem}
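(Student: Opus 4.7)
The plan is to prove both implications separately, using Lemma~\ref{lem:tree_unique_extension} for the harder direction.

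For the forward direction, I would observe that for any $S \subseteq V(G)$ and any induced cycle $C$ of $G$, the set of edges of $C$ with exactly one endpoint in $S$ forms a cut of $C$ (regarded as a subgraph in its own right), and any cut of a cycle has an even number of edges. Consequently, $\sigma^S(C) - \sigma(C) \equiv 0 \pmod 2$, which gives $\sigma^S|_{\cyc(G)} = \sigma|_{\cyc(G)}$ and hence the forward direction.

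For the backward direction, assume $\sigma|_{\cyc(G)} = \sigma'|_{\cyc(G)} =: \beta$. I may assume without loss of generality that $G$ is connected (otherwise I handle components independently, choosing an $S$ in each). I would fix a spanning tree $T$ of $G$ and a root $r \in V(G)$, and then define $S \subseteq V(G)$ by declaring $r \notin S$ and, for every other vertex $v$, placing $v$ in $S$ precisely when the unique $r$--$v$ path in $T$ uses an odd number of edges $e$ with $\sigma(e) \neq \sigma'(e)$. A straightforward induction on the distance from $r$ in $T$ then shows that for every tree edge $uv$ we have $|S \cap \{u,v\}| = 1$ iff $\sigma(uv) \neq \sigma'(uv)$; equivalently, $\sigma^S$ and $\sigma'$ agree on $E(T)$.

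To conclude, I would apply the forward direction to infer that $\sigma^S$ is $\beta$-balancing, and observe that $G$ is $\beta$-balanceable (as witnessed by $\sigma$). Lemma~\ref{lem:tree_unique_extension} then forces any $\beta$-balancing labelling of $G$ to be uniquely determined by its restriction to the maximal acyclic subgraph $T$, so $\sigma^S = \sigma'$. The only real subtlety is choosing the definition of $S$ so that $\sigma^S$ matches $\sigma'$ on $T$; once this bookkeeping is in place, the cycle/cut duality and Lemma~\ref{lem:tree_unique_extension} do all the remaining work, so I do not anticipate any significant obstacle.
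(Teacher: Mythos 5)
Your proof is correct and complete. The paper itself does not prove this statement -- it imports it as Proposition~3.2 of Zaslavsky's paper -- so there is no internal argument to compare against; your argument is the standard one (even cuts of cycles for the forward direction, a parity-along-a-spanning-tree definition of $S$ plus the uniqueness clause of Lemma~\ref{lem:tree_unique_extension} for the converse) and it correctly handles the only subtleties, namely disconnected graphs and the fact that $G$ is $\beta$-balanceable as witnessed by $\sigma$ itself.
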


\section{The generic circular triangle-free graph}\label{sec:circular}
In~\cite{Bodirsky2024Circular}, Bodirsky and Guzmán-Pro introduced the \emph{generic circular triangle-free graph} $\C$,
whose vertices are unit complex numbers with rational argument\footnote{In this paper, the range of $\arg$ is $(-\pi,\pi]$.}
such that $(z,w)$ is an edge of $\C$ if and only if $\lvert\arg(z/w)\rvert > \frac{2}{3}\pi$. This graph has many
interesting properties; for example, a graph $G$ has circular chromatic number less than 3 if and only if it
admits a homomorphism to $\C$; and a graph $G$ of independence number at most 2 is a
unit Helly circular-arc graph if and only if it embeds into $\overbar{\C}$. The authors also described $\C$ by
means of forbidden subgraphs, we restate this description in Theorem~\ref{thm:santiago} below.
Recall that $G+H$ denotes the disjoint union of $G$ and $H$, and we write $2G$ as a shortcut for $G+G$.

\begin{theorem}[Theorem~12 from~\cite{Bodirsky2024Circular}]\label{thm:santiago}
A graph $G$ embeds into $\C$ if and only if it is a $\{K_3, 2K_2+K_1, C_5+K_1, C_6\}$-free
graph.
\end{theorem}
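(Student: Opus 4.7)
The plan is to prove the two directions of the equivalence separately. The forward direction is a direct geometric verification on the unit circle; the backward direction is the substantive content and I would approach it via Fra\"iss\'e theory, or equivalently via a direct extension-style induction.

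For $(\Rightarrow)$, suppose $G$ embeds into $\C$ and contains one of the four forbidden graphs as an induced subgraph. For $K_3$ the contradiction is immediate: three vertices with pairwise angular distance exceeding $2\pi/3$ would partition the circle into arcs summing to more than $2\pi$. For $2K_2+K_1$ and $C_5+K_1$, the isolated vertex forces all remaining vertices to lie in an arc of length $4\pi/3$; within such an arc, requiring two disjoint edges (respectively a $C_5$) with the correct adjacency pattern can be ruled out by considering the positions of the endpoints and exhausting the cyclic orderings. For $C_6$, labelling the vertices $v_0,\dots,v_5$ cyclically and letting $\alpha_i\in(-\pi,\pi]$ denote the signed arc from $v_i$ to $v_{i+1}$, the closure condition $\sum_{i=0}^{5}\alpha_i\in 2\pi\Z$ together with $|\alpha_i|>2\pi/3$ for every edge and the distance constraints $|\alpha_i+\alpha_{i+1}|\leq 2\pi/3$ for every non-edge at cycle-distance two leaves no feasible configuration.

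For $(\Leftarrow)$, let $\mathcal{F}$ denote the hereditary class of finite $\{K_3,2K_2+K_1,C_5+K_1,C_6\}$-free graphs. My strategy is to show that $\mathcal{F}$ has the joint embedding and amalgamation properties, and that $\C$ realizes its Fra\"iss\'e limit. By $(\Rightarrow)$ the age of $\C$ is contained in $\mathcal{F}$, so once one verifies that $\C$ is countable and ultrahomogeneous (which follows from the description of its vertex set as the unit complex numbers with rational argument and the rotational and conjugation symmetries of the edge relation), the universality clause of the Fra\"iss\'e limit gives the desired embedding of every finite $G\in\mathcal{F}$ into $\C$. An equivalent direct route is induction on $|V(G)|$: assume $G-v$ embeds into $\C$ and extend by placing $v$ at an angle satisfying the intersection of complementary arcs (for neighbors of $v$) with concentric arcs (for non-neighbors); the absence of each forbidden subgraph precisely rules out the corresponding infeasibility pattern in the arc intersection.

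The main obstacle in both approaches is the combinatorial balancing needed to amalgamate or extend. The free amalgam of two $\mathcal{F}$-graphs over a common induced subgraph $H$ is typically not in $\mathcal{F}$, because a $2K_2+K_1$ or a $C_5+K_1$ can straddle the two sides when an isolated vertex on one side meets disjoint edges on the other. One must therefore add edges between the two sides in a carefully controlled way, destroying the straddling copies of $2K_2+K_1$ and $C_5+K_1$ without ever creating a $K_3$ or a $C_6$. This finely tuned combinatorial balancing, calibrated to exactly the four forbidden graphs and to the geometry of arcs of length $2\pi/3$ on the circle, is the technical heart of the proof and is where I would expect to spend the bulk of the effort.
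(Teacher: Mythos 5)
First, note that the paper you are working from does not prove this statement at all: it is quoted verbatim as Theorem~12 of~\cite{Bodirsky2024Circular}, so there is no internal proof to compare against. Your forward direction is essentially fine (the $K_3$ case is the arc-sum argument you give, and the remaining three graphs are finite case checks), but the backward direction, which you correctly identify as the substantive content, has a genuine gap rather than merely a technical obstacle.

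The Fra\"iss\'e route cannot work in the form you propose, because $\C$ is \emph{not} ultrahomogeneous as a graph and the class $\mathcal F$ of finite $\{K_3,2K_2+K_1,C_5+K_1,C_6\}$-free graphs does \emph{not} have the amalgamation property in the category of graphs. The paper itself signals this: in the concluding section it recalls that $\C$ is only \emph{homogenizable}, namely one must add a quaternary separation relation and a local betweenness relation to make it homogeneous; rotational and reflection symmetries give vertex-transitivity, not ultrahomogeneity. Independently, the Lachlan--Woodrow classification of countable homogeneous graphs shows that no homogeneous graph has age $\mathcal F$ (the only triangle-free candidate is the Henson graph, whose age contains $C_6$), so the amalgamation property genuinely fails and no amount of ``carefully controlled'' edge-adding between the two sides of an amalgam can rescue it uniformly. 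Your ``equivalent direct route'' by one-point extension fails for a concrete reason: an embedding of $G-v$ need not extend. For example, $\overbar{K_3}$ embeds into $\C$ by placing its three vertices at the cube roots of unity (pairwise argument-distance exactly $2\pi/3$, hence pairwise non-adjacent), but then no fourth vertex can be adjacent to all three, even though $K_{1,3}\in\mathcal F$ does embed into $\C$ (cluster the three leaves in a short arc and put the center antipodally). So the induction must carry a stronger hypothesis about \emph{which} embedding of $G-v$ has been chosen --- in effect, an invariant tracking the circular arrangement of the image --- and identifying and propagating that invariant is precisely the missing content. The actual proof in~\cite{Bodirsky2024Circular} works with the circular/order structure on the vertex set (equivalently, with the expanded language in which the class does amalgamate) rather than with the bare graph class $\mathcal F$.
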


Theorem~\ref{thm:c_balanceable} is an easy corollary of Theorems~\ref{thm:truemper} and~\ref{thm:santiago}.
Let $W_4$ and $W_5$ be the graphs obtained by adding a universal vertex to $C_4$ and to $C_5$, respectively.
Notice that both $W_4$ and $W_5$ are wheels, and that $W_4 = \overbar{2K_2+K_1}$, and $W_5 = \overbar{C_5+K_1}$.

\begin{proof}[Proof of Theorem~\ref{thm:c_balanceable}]
To simplify notation, we will prove that a $\overbar{K_3}$-free graph $G$ embeds into $\overbar{\C}$ if and only if it is anti-even-signable.
By looking at the complements, Theorem~\ref{thm:santiago} tells us that  $G$ embeds into $\oC$ if and only if it has no independent set of size 3 and it is $\{W_4,W_5,\overbar{C_6}\}$-free. In Example~\ref{ex:C6-complement} we argue
that $\overbar{C_6}$ is a three path configuration that is not anti-even-signable, and with similar arguments one can
notice that $W_4$ and $W_5$ are wheels which are not anti-even-signable.
Thus, by Theorem~\ref{thm:truemper}, if $G$ is anti-even-signable, then it is $\{W_4,W_5,\overbar{C_6}\}$-free, and so
it indeed embeds into $\overbar{\C}$ by Theorem~\ref{thm:santiago}. 

In order to prove the other implication, let $G$ be a graph which embeds into $\oC$. By Theorem~\ref{thm:santiago}, $G$ is $\{\overbar{K_3},W_4,W_5,\overbar{C_6}\}$-free. By Theorem~\ref{thm:truemper}, it suffices to prove that every three path configuration and every wheel in $G$ is anti-even-signable.

First, let $P \subseteq G$ induce a three path configuration as witnessed by paths $P_1, P_2, P_3$ such that $P_i$ has endpoints $x_i,y_i$ for every $i\in \{1,2,3\}$. Note that there is $i\in \{1,2,3\}$ such that $\lvert P_i\rvert = 2$, as otherwise the internal vertices of the paths contain an independent set on 3 vertices. Without loss of generality we may assume that $\lvert P_1\rvert = 2$.
It follows that $|\{x_1,x_2,x_3\}| = 3$. 
We now consider the following case distinction:
\begin{enumerate}
    \item If $\lvert P_2\rvert = \lvert P_3\rvert = 2$, then either $y_1 = y_2 = y_3$, and so $P$ is isomorphic to $K_4$,
    or  $|\{y_1,y_2,y_3\}| = 3$, and so $P$ is isomorphic to $\overbar{C_6}$. Since $G$ is $\overbar{C_6}$-free, it must
    be the case that $P$ induces a $4$-clique, which is anti-even-signable by labelling all edges with $0$.
    \item\label{pf:case:2} If $\lvert P_2\rvert = 2$ and $\lvert P_3\rvert \geq 3$ then it cannot be the case that
    $|\{y_1,y_2,y_3\}| = 3$ because $x_1$ and $y_2$ together with any internal vertex of
    $P_3$ would induce an independent set of size $3$, contradicting the choice of $G$. Hence,
    $y_1=y_2=y_3$, and in this case, we can label one arbitrary edge of $P_3$ by $1$
    and all other edges by $0$ to obtain an anti-even-balancing labelling. 
    \item The case when $\lvert P_3\rvert = 2$ and $\lvert P_2\rvert \geq 3$ is symmetric to the one above.
    \item The above are the only possible cases when $G$ is a $\overbar{K_3}$-free graph. Indeed, in any case
    not considered above, we have that  
    $\lvert P_2\rvert,\lvert P_3\rvert \geq 3$, and so $x_1$ together with any internal vertex $z_2$ of $P_2$ and any internal
    vertex $z_3$ of $P_3$ induce an independent set of size 3.
    \end{enumerate}

Next, observe that $C_n$ contains an independent set on 3 vertices if and only if $n\geq 6$, hence $G$ contains no $n$-cycles with $n\geq 6$. Clearly, the only wheel over $C_3$ is $K_4$ and it is anti-even-signable as noted above. 
Since $G$ does not embed $W_4$, the only possible wheel over $C_4$ is $C_4$ with an extra vertex $v$ connected to
three vertices of the $C_4$. Notice that this induces a three path configuration as in case (\ref{pf:case:2}) above
with $y_1 = y_2 = y_3$, and so it is anti-even-signable. Finally,  
as $G$ does not embed $W_5$, there are three possible wheels over $C_5$ which $G$ might contain. One of 
them contains $\overline{K_3}$ (so it cannot embed into $G$), and the other two admit
and anti-even-balancing labelling (see Figure~\ref{fig:wheels5}). 
Hence $G$ is anti-even-signable by Theorem~\ref{thm:truemper}.
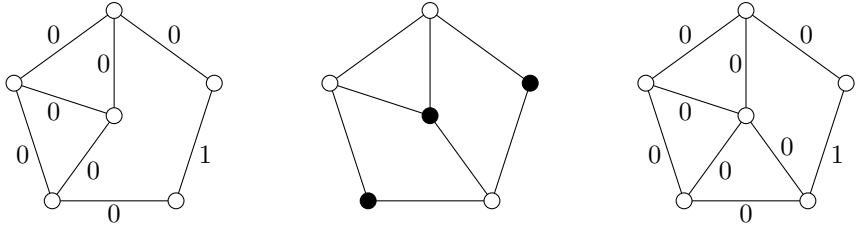
\begin{figure}
    \begin{tikzpicture}[every edge quotes/.style = {auto, font=\footnotesize, inner sep=1.2pt},scale = 0.7]
    \begin{scope}[]
    \node [vertex] (a) at (90:2) {};
    \node [vertex] (b) at (162:2) {};
    \node [vertex] (c) at (234:2) {};
    \node [vertex] (d) at (306:2) {};
    \node [vertex] (e) at (18:2) {};
    \node [vertex] (v) at (0:0) {};
      
   \draw   (a)  edge["$0$"] (e);
   \draw   (e)  edge["$1$"] (d);
   \draw   (d)  edge["$0$"] (c);
   \draw   (c)  edge["$0$"] (b);
   \draw   (b)  edge["$0$"] (a);
   \draw   (v)  edge["$0$"] (c);
   \draw   (v)  edge["$0$"] (b);
   \draw   (v)  edge["$0$"] (a);
  \end{scope}

  \begin{scope}[xshift = 6cm]
    \node [vertex] (a) at (90:2) {};
    \node [vertex] (b) at (162:2) {};
    \node [vertex, fill = black] (c) at (234:2) {};
    \node [vertex] (d) at (306:2) {};
    \node [vertex, fill = black] (e) at (18:2) {};
    \node [vertex, fill = black] (v) at (0:0) {};

    \draw   (a)  edge (e);
   \draw   (e)  edge (d);
   \draw   (d)  edge (c);
   \draw   (c)  edge  (b);
   \draw   (b)  edge  (a);
   \draw   (v)  edge (d);
   \draw   (v)  edge (b);
   \draw   (v)  edge (a);
  \end{scope}
  
  \begin{scope}[xshift = 12cm]
    \node [vertex] (a) at (90:2) {};
    \node [vertex] (b) at (162:2) {};
    \node [vertex] (c) at (234:2) {};
    \node [vertex] (d) at (306:2) {};
    \node [vertex] (e) at (18:2) {};
    \node [vertex] (v) at (0:0) {};
      
   \draw   (a)  edge["$0$"] (e);
   \draw   (e)  edge["$1$"] (d);
   \draw   (d)  edge["$0$"] (c);
   \draw   (c)  edge["$0$"] (b);
   \draw   (b)  edge["$0$"] (a);
   \draw   (v)  edge["$0$"] (c);
   \draw   (v)  edge["$0$"] (b);
   \draw   (v)  edge["$0$"] (a);
   \draw   (v)  edge["$0$"] (d);
  \end{scope}
  \end{tikzpicture}

\label{fig:wheels5}
\caption{Three non-isomorphic wheels over $C_5$ different from $W_5$.}
\end{figure}
\end{proof}

Now we can also prove Theorem~\ref{cor:balanceable}. 

\begin{proof}[Proof of Theorem~\ref{cor:balanceable}]
By~\cite{Bodirsky2024Circular}, a graph $G$ has circular chromatic number less than 3 if and only if $G$ has a homomorphism to $\C$. It is easy to verify that $G$ has a homomorphism to $\C$ if and only if it has an injective homomorphism to $\C$. Clearly, $G$ is (up to isomorphism) a spanning subgraph of its image under an injective homomorphism, and Theorem~\ref{thm:c_balanceable} then gives the conclusion.
\end{proof}

Next, we turn our attention to constructing a particularly nice anti-even-balancing labelling of $\overbar{\C}$ in order to prove Theorem~\ref{thm:main}.

\begin{definition}\label{defn:sigmac}
Fix a partition $V(\oC) = C_0\cup C_1$ into disjoint dense subsets, and a set $P \subseteq V(\oC)$ on which $\oC$ induces a 5-cycle.
Given $x\in V(\oC)$, let $i(x)\in \{0,1\}$ be the index such that $x\in C_{i(x)}$. Enumerate $P = \{p^0,\ldots,p^4\}$ such that
$p^ip^{i+1} \in E(\oC)$ for every $i\in \{0,\ldots,4\}$ with addition modulo 5.
For every $x\in V(\oC)$, we define $p_x := p^i$ where $i\in \{0,\ldots,4\}$ is the smallest number with the property that $xp^i \in E(\oC)$;
such $p_x$ exists as $\oC$ is $\overbar{K_3}$-free, and in particular, $p_{p^0} = p^1$, $p_{p^4} = p^0$, and $p_{p^i} = p^{i-1}$ for $i\in\{1,2,3\}$.
Let $T$ be the spanning subtree of $\oC$ defined by $V(T) = V(\oC)$ and $E(T) = \{xp_x : x\in V(T)\}$. 
Define a labelling $\sigma_T$ of $T$ by putting 
$\sigma_T(xp_x) = i(x)$, and use Lemma~\ref{lem:tree_unique_extension} to
get an anti-even-balancing labelling $\soc$ of $\oC$ such that $\sigma_T = \soc|_{E(T)}$.
\end{definition}
We will need the following two technical lemmas.

\begin{lemma}\label{lem:switch}
Let $X$ and $Y$ be induced subgraphs of $\oC$ and let  $f\colon X\to Y$ be an isomorphism 
such that for every $x\in V(X)$ and every $p\in P$ we have that $px \in E(\oC)$ if and only if $pf(x) \in E(\oC)$.
Then, for every edge $xy\in E(X)$ the following equality holds,
$$\soc(f(x)f(y)) = \soc(xy) + i(x) + i(f(x)) + i(y) + i(f(y)).$$
\end{lemma}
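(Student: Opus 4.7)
The plan is to recast the identity as an invariance statement about the switched labelling $\tau := \soc^{C_1}$, where $C_1 = \{v \in V(\oC) : i(v) = 1\}$, and then pin this invariance down using the rigidity of $P$ together with the uniqueness theorem for switching equivalence. Observe that $\tau(uv) = \soc(uv) + i(u) + i(v)$ on each edge of $\oC$, so the conclusion of the lemma is equivalent to $\tau(f(x)f(y)) = \tau(xy)$ for every $xy \in E(X)$. By Observation~\ref{obs:switch_commutative} together with Theorem~\ref{thm:switching_equivalent}, $\tau$ is itself anti-even-balancing on $\oC$.

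The next step is to extend $f$ to an isomorphism $f'\colon X \cup P \to Y \cup P$ of induced subgraphs of $\oC$ that fixes $P$ pointwise. Setting $R(v) := \{p \in P : vp \in E(\oC)\}$, the hypothesis reads $R(x) = R(f(x))$ for $x \in V(X)$. Theorem~\ref{thm:santiago} implies that $\oC$ has no independent set of size 3; since any three vertices of the 5-cycle $P$ contain a non-edge, this forces $|R(v)| \geq 3$ for $v \notin P$, whereas $|R(p^i)| = 2$ and each $R(p^i) = \{p^{i-1}, p^{i+1}\}$ uniquely identifies $p^i$ in $P$. Hence $f(p^i) = p^i$ whenever $p^i \in V(X) \cap P$, so $f'$ is a well-defined isomorphism.

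Now both $\tau|_{X \cup P}$ and $\tau \circ f'$ are anti-even-balancing labellings of the induced subgraph $X \cup P$, which is connected because every $x \in V(X) \setminus P$ is $\oC$-adjacent to some element of $R(x) \subseteq P$, while $P$ is itself connected. Theorem~\ref{thm:switching_equivalent} therefore yields $S \subseteq V(X) \cup P$ with $\tau \circ f' = (\tau|_{X \cup P})^S$. Since $f'$ fixes $P$ pointwise, $S$ flips no edge of $P$, so the connectedness of $P$ gives $S \cap P \in \{\emptyset, P\}$; replacing $S$ with its complement in $V(X) \cup P$ if necessary (permissible because $X \cup P$ is connected), I may assume $S \cap P = \emptyset$.

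The main step, which I expect to be the heart of the argument, is to deduce $S = \emptyset$. For each $x \in V(X) \setminus P$, the hypothesis $R(x) = R(f(x))$ forces $p_x = p_{f(x)}$, since $p_v$ is the element of $R(v)$ with smallest index. Using $\soc(xp_x) = i(x)$ from Definition~\ref{defn:sigmac}, a direct computation gives $\tau(xp_x) = i(p_x)$, and the same computation applied to $f(x)$ yields $\tau \circ f'(xp_x) = \tau(f(x)p_{f(x)}) = i(p_{f(x)}) = i(p_x) = \tau(xp_x)$. Hence $|S \cap \{x, p_x\}|$ is even, and combined with $p_x \notin S$ this forces $x \notin S$. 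Thus $S = \emptyset$, so $\tau \circ f' = \tau|_{X \cup P}$, which is exactly the required invariance of $\tau$ under $f$.
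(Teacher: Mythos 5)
Your proof is correct, but it takes a genuinely different route from the paper's. The paper argues locally, edge by edge: it first shows that any edge $xy$ of $\oC$ has a common neighbour $p\in P$, uses the two triangles $xyp$ and $f(x)f(y)p$ to reduce the claim to comparing $\soc(xp)$ with $\soc(f(x)p)$, and evaluates these via the tree edges $xp_x$, $f(x)p_{f(x)}$ from Definition~\ref{defn:sigmac}. You instead reformulate the identity globally as the $f$-invariance of the switched labelling $\tau=\soc^{C_1}$, and then invoke Theorem~\ref{thm:switching_equivalent} on the finite connected graph induced by $V(X)\cup P$ to reduce everything to showing that the switching set $S$ is trivial, which you pin down using the edges of $P$ and the tree edges $xp_x$. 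Both arguments ultimately rest on the same two ingredients ($p_x=p_{f(x)}$ and $\soc(xp_x)=i(x)$), but yours replaces the paper's geometric common-neighbour observation by the weaker fact $|R(v)|\ge 3$ (correctly derived from $\overbar{K_3}$-freeness of $\oC$) together with the switching-equivalence machinery; the price is the extra bookkeeping of extending $f$ to fix $P$ pointwise, which you also justify correctly. Two cosmetic remarks: the replacement of $S$ by its complement is always permissible since $\sigma^S=\sigma^{V\setminus S}$ on the full vertex set, so the appeal to connectedness there is unnecessary (connectedness of the $5$-cycle is what you actually need to get $S\cap P\in\{\emptyset,P\}$); and if $X$ is infinite one should either note that the paper's Theorem~\ref{thm:switching_equivalent} is stated for countable graphs or simply restrict to the finite subgraph induced by $\{x,y\}\cup P$, since the hypothesis of the lemma is inherited by restrictions of $f$.
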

\begin{proof}
Pick an arbitrary edge $xy \in E(X)$. Observe that there is $p\in P$ such that both $xp$ and $yp$ are edges of $\oC$:
if there is a vertex $q\in P$ in the shortest circular-arc between $x$ and $y$, then we can put $p=q$.
Otherwise $x$ and $y$ are in the shortest circular-arc between consecutive vertices in $P$, and we can put $p$ to be either of them.  Consequently, $\soc(xy) + \soc(xp) + \soc(yp) = 0$, and $\soc(f(x)f(y)) + \soc(f(x)p) + \soc(f(y)p) = 0$, as $\soc$ is anti-even-balancing. Summing these equalities, we get
$$\soc(f(x)f(y)) = \soc(xy) + \soc(xp) + \soc(yp) + \soc(f(x)p) + \soc(f(y)p).$$
Next, 
observe that the neighbours of $x$ in $P$ appear consecutively in the cycle. Hence, since the labels of triangles add up to 
$0$, the label of any edge from $x$ to $P$ determines the labels of all edges from $x$ to $P$.
Also, for every $x\in V(X)$ we have that $p_x=p_{f(x)}$ (this follows from the definition of $p_x$ and the assumption
that for every $p\in P$ we have that $px\in E(\oC)$ if and only if $pf(x)\in E(\oC)$).
Consequently,  using the definition of $\soc$, we see that 
$\soc(xp) = \soc(f(x)p)$ if and only if $i(x) = i(f(x))$. 
Equivalently, $\soc(xp) + \soc(f(x)p) = i(x) + i(f(x))$, and similarly for $y$.
The conclusion is now immediate.
\end{proof}

\begin{lemma}\label{lem:cswitching}
Let $S\subseteq X \subseteq V(\C)$ be finite sets. Then there is a map $f\colon X\to V(\C)$ with the following properties:
\begin{enumerate}
    \item $f$ is the identity on $X\setminus S$, and
    \item $f$ is a label-preserving isomorphism $(G,\soc^S|_{E(G)}) \to (H,\soc|_{E(H)})$ where $G$ is the subgraph of $\oC$ induced by $X$ and $H$ is the subgraph induced by $f[X]$.
\end{enumerate}
\end{lemma}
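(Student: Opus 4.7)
The plan is to construct $f$ by slightly perturbing each vertex $v \in S$ along the unit circle, moving it to a nearby vertex $f(v) \in V(\oC)$ of the opposite class (that is, with $i(f(v)) = 1 - i(v)$). Lemma~\ref{lem:switch} will then say that the label of each edge $xy$ in the image changes from $\soc(xy)$ by exactly $i(x) + i(f(x)) + i(y) + i(f(y))$, and with our choice of $f$ this quantity is precisely $|\{x,y\}\cap S| \bmod 2$, which is the difference between $\soc$ and $\soc^S$.

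For the construction, recall that vertices of $\oC$ are unit complex numbers with rational argument, and $uv \in E(\oC)$ iff $|\arg(u/v)| \leq \tfrac{2}{3}\pi$. Since $X \cup P$ is finite, we may choose $\delta > 0$ such that the distance between $\tfrac{2}{3}\pi$ and $|\arg(u/v)|$ exceeds $2\delta$ for every pair $u,v \in X \cup P$. This ensures that any displacement of a single vertex of $X \cup P$ by angular distance less than $\delta$ preserves its adjacencies (in $\oC$) to all other vertices of $X \cup P$, and that any pair of simultaneous displacements of size less than $\delta$ preserves the adjacency between the two moved vertices. Using that $C_{1 - i(v)}$ is dense in $V(\oC)$ (hence dense on the unit circle), for each $v \in S$ we pick $f(v) \in C_{1 - i(v)}$ with angular distance less than $\delta$ from $v$, taking care to avoid the finitely many previously fixed vertices so that $f$ remains injective. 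Set $f(x) = x$ for $x \in X \setminus S$.

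By construction $f$ is a bijection $X \to f[X]$, and for all $x, y \in X$, $xy \in E(\oC)$ iff $f(x)f(y) \in E(\oC)$, so $f$ is an isomorphism between the induced subgraphs $G$ (on $X$) and $H$ (on $f[X]$). Similarly, for every $x \in X$ and $p \in P$ we have $px \in E(\oC)$ iff $pf(x) \in E(\oC)$, so Lemma~\ref{lem:switch} applies and yields for every edge $xy \in E(G)$ the equality
$$\soc(f(x)f(y)) = \soc(xy) + i(x) + i(f(x)) + i(y) + i(f(y)).$$
For $x \in X \setminus S$ we have $f(x) = x$, hence $i(x) + i(f(x)) = 0$; for $x \in S$ the choice $i(f(x)) = 1 - i(x)$ yields $i(x) + i(f(x)) = 1$. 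The correction term therefore equals $|\{x,y\} \cap S| \bmod 2$, which by definition of switching equals $\soc^S(xy) - \soc(xy)$ in $\Z_2$. Hence $\soc(f(x)f(y)) = \soc^S(xy)$ for every edge $xy \in E(G)$, as required.

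The only potential obstacle is justifying that the density of $C_{1-i(v)}$ permits arbitrarily small perturbations within $V(\oC)$ while flipping the class; this follows immediately from the fact that $C_0$ and $C_1$ are dense subsets of the rational points on the unit circle. All other verifications are bookkeeping on top of Lemma~\ref{lem:switch}.
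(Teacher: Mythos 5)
Your proposal is correct and follows essentially the same route as the paper: choose a perturbation threshold from the finite set $X\cup P$ so that moving each $v\in S$ by less than that amount preserves all adjacencies to $X\cup P$, use density of $C_{1-i(v)}$ to land in the opposite class, and then apply Lemma~\ref{lem:switch} to see that the label correction term is exactly $\lvert\{x,y\}\cap S\rvert \bmod 2$, i.e., the switching correction. The only cosmetic difference is that you make the injectivity bookkeeping explicit, which the paper leaves implicit.
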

\begin{proof}
First, note that from the definition of $\C$ we have the following:
\begin{claim}\label{cl:epsilon}
If $x,x',y,y'\in V(\C)$ are such that 
$$\max\left(\lvert \arg(x/x')\rvert, \lvert \arg(y/y')\rvert\right) < \frac{1}{2} \left\lvert \frac{2\pi}{3} - \lvert\arg(x/y)\rvert\right\rvert$$ then $xy$ is an edge of $\C$ (respectively $\oC$) if and only if $x'y'$ is.
\end{claim}

Put
$$\varepsilon = \frac{1}{2}\min\left\{\left\lvert \frac{2\pi}{3} - \lvert\arg(x/y)\rvert\right\rvert : x,y\in X\cup P\right\}.$$
Define a function $f\colon X \to V(\oC)$ such that if $x\in X\setminus S$ then $f(x) = x$, and if $x\in S$ then $f(x)$ is some vertex such that such that $\lvert \arg(f(x)/x)\rvert < \varepsilon$ and $f(x)\in C_{1-i(x)}$. By Claim~\ref{cl:epsilon} we have that $f$ is an isomorphism  
between the graphs induced by $X$ and by $f[X]$. Moreover, 
for every $x\in X$ and every $p\in P$ we have  that $xp \in E(\oC)$ if and only if $f(x)p \in E(\oC)$. Consequently, Lemma~\ref{lem:switch}
tells us that for every $x,y\in X$ with $xy \in E(\oC)$ the equality 
$$\soc(f(x)f(y)) = \soc(xy) + i(x) + i(f(x)) + i(y) + i(f(y))$$
holds. Finally,  
by the definition of $f$ we know that $i(x) + i(f(x)) + i(y) + i(f(y)) = 1$ if and only if exactly one of $x$ and $y$ is in $S$. This finishes the proof.
\end{proof}

Now we are ready to prove Theorem~\ref{thm:main}.
\begin{proof}[Proof of Theorem~\ref{thm:main}]
We will prove that the labelling $\soc$ from Definition~\ref{defn:sigmac} has these properties. Let $C_0$, $C_1$, and $P$
be as in Definition~\ref{defn:sigmac}. We need to prove that given a finite $\overbar{K_3}$-free graph $G$ with an
anti-even-balancing labelling $\sigma_G$, there is a label-preserving embedding $f\colon (G,\sigma_G)\to (\oC,\soc)$.
Fix an anti-even-balancing labelling $\sigma_G$ of such a graph $G$.
By Theorem~\ref{thm:c_balanceable}, there is an embedding $g\colon G\to \overbar{\C}$. Since $\soc|_{E(G)}$ is anti-even-balancing, by Theorem~\ref{thm:switching_equivalent} there is $S\subseteq V(G)$ such that $\sigma_G^S = \soc|_{E(g(G))}$. Lemma~\ref{lem:cswitching} gives us the rest.
\end{proof}

In fact, $\soc$ has even more nice properties. The following one will be important in Section~\ref{sec:ra56}.
\begin{definition}\label{defn:extension_property}
A signed graph $(G,\sigma)$ \emph{has the $3$-extension property} if for every graph $H$ with $\lvert V(H)\rvert \leq 3$, every anti-even-balancing labelling $\tau$ of $H$, every signed induced subgraph $(H',\tau')$ of $(H,\tau)$, and every label-preserving embedding $f\colon (H',\tau') \to (G,\sigma)$,
there is a label-preserving embedding 
$g\colon (H,\tau) \to (G,\sigma)$ 
such that $g|_{V(H')} = f$.
\end{definition}
Note that $\tau$ being anti-even-balancing is equivalent to saying that if $H$ is a triangle then $\tau(H) = 0$.

\begin{lemma}\label{lem:extension}
$(\oC,\soc)$ has the $3$-extension property.
\end{lemma}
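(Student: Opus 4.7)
The plan is to induct on $|V(H) \setminus V(H')|$. The base case $V(H) = V(H')$ is immediate with $g = f$. For the inductive step, it suffices to treat the case $V(H) = V(H') \cup \{a\}$, as iterated application yields the general case. If $|V(H')| = 0$, any $v \in V(\oC)$ will do. If $|V(H')| = 1$ with $b_1 := f(a_1)$, a non-neighbor of $b_1$ in $\oC$ exists by density (and the fact that $\oC$ is not a complete graph), and for an adjacent vertex with any specified label I start with any neighbor $v_0$ and, if its label is not the desired one, apply Lemma~\ref{lem:cswitching} to $X = \{b_1, v_0\}$ with $S = \{v_0\}$ to flip it.

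The core case is $|V(H')| = 2$, with $b_i := f(a_i)$. Here the plan is to first apply Theorem~\ref{thm:main} to $(H, \tau)$, yielding a label-preserving embedding $h \colon (H, \tau) \to (\oC, \soc)$; writing $c_i := h(a_i)$ and $w := h(a)$, the triple $(c_1, c_2, w)$ realizes the desired abstract signed 3-vertex extension inside $(\oC, \soc)$. Since both $\{b_1, b_2\}$ and $\{c_1, c_2\}$ induce signed subgraphs of $(\oC, \soc)$ isomorphic to $(H', \tau')$, it remains to produce $v \in V(\oC)$ so that $\{b_1, b_2, v\}$ is label-preserving isomorphic to $\{c_1, c_2, w\}$ (with $b_i \leftrightarrow c_i$ and $v \leftrightarrow w$).

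When $b_1 b_2$ is an edge of $\oC$, the anti-even-balancing condition on the resulting triangle constrains the two labels $\soc(vb_1), \soc(vb_2)$ to sum to $\soc(b_1 b_2)$; of the two resulting admissible label pairs, one is realized by any common $\oC$-neighbor $v_0$ of $b_1, b_2$ (whose existence follows from the extension property of unsigned $\oC$ in the class of $\{\overbar{K_3}, W_4, W_5, \overbar{C_6}\}$-free graphs, cf.~Theorem~\ref{thm:santiago}), and the other by applying Lemma~\ref{lem:cswitching} to $X = \{b_1, b_2, v_0\}$ with $S = \{v_0\}$. The main obstacle is the sub-case where $b_1 b_2$ is a non-edge: then no triangle constraint applies and all four label pairs $(\alpha, \beta) \in \Z_2^2$ must be realized as $\soc$-labels on edges $vb_1, vb_2$ for suitable common neighbors $v$. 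Since Lemma~\ref{lem:cswitching} applied to a starting $v_0$ only transitions between $(\alpha_0, \beta_0)$ and $(\alpha_0+1, \beta_0+1)$, we need a common neighbor in the complementary coset of $\{(0,0), (1,1)\}$ in $\Z_2^2$. I expect to establish this by exploiting the freedom in the choice of $v$: by Lemma~\ref{lem:switch} and the construction of $\soc$ in Definition~\ref{defn:sigmac}, moving $v$ to a common neighbor with a different $P$-neighborhood changes the tree-path contribution to $\soc(vb_1)$ and $\soc(vb_2)$ asymmetrically, yielding label pairs in the other coset and completing the construction of the required $v$.
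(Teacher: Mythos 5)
Your reduction to adding a single vertex, and your treatment of the cases $\lvert V(H')\rvert\leq 1$ and of the case where $b_1b_2$ is an edge (triangle constraint plus one application of Lemma~\ref{lem:cswitching} over $S=\{v_0\}$), all match the paper's proof. The detour through Theorem~\ref{thm:main} is not circular (it is proved before this lemma and does not depend on it), but it is also not doing any work: knowing that the target signed triple is realized \emph{somewhere} in $(\oC,\soc)$ does not help you realize it over the prescribed pair $b_1,b_2$.

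The genuine gap is exactly where you flag "the main obstacle": producing a common neighbour of a non-adjacent pair $b_1,b_2$ whose label pair lies in the complementary coset of $\{(0,0),(1,1)\}$. Your proposed mechanism --- move $v$ to a common neighbour with a different $P$-neighbourhood so that "the tree-path contribution changes asymmetrically" --- is not an argument, and it also points at the wrong invariant. Note first that Lemma~\ref{lem:switch} only applies when the $P$-neighbourhoods are \emph{preserved}, so it says nothing about what happens when you change them. More importantly, whether two common neighbours $v_1,v_2$ of $b_1,b_2$ yield label pairs in the same or in different cosets is governed by whether $v_1v_2\in E(\oC)$: if $v_1v_2$ is an edge, the two triangles $b_1v_1v_2$ and $b_2v_1v_2$ each sum to $0$, forcing $\soc(v_1b_1)+\soc(v_1b_2)=\soc(v_2b_1)+\soc(v_2b_2)$, i.e.\ the \emph{same} coset regardless of $P$-neighbourhoods; if $v_1v_2$ is a non-edge, then $\{b_1,v_1,b_2,v_2\}$ induces a $4$-cycle, whose labels sum to $1$ since $\soc$ is anti-even-balancing, which is precisely the coset flip you need. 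So the missing step is to exhibit two common neighbours of $b_1,b_2$ that are non-adjacent to each other; the paper does this by taking (perturbations of) the midpoints $z,w$ of the two circular arcs between $b_1$ and $b_2$, which are antipodal and hence non-adjacent in $\oC$ while each being adjacent to both $b_1$ and $b_2$. With that $4$-cycle in hand (and one more switch over the chosen vertex if needed), your argument closes; without it, the crucial case remains unproved.
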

\begin{proof}
Fix $(H,\tau)$, $(H',\tau')$, and $f\colon (H',\tau') \to (G,\sigma)$ as in Definition~\ref{defn:extension_property}. Without loss of generality we can assume that $f$ is the identity and that $V(H)\setminus V(H') = \{t\}$ for some $t$. If $V(H') = \emptyset$ then we can put $g(t)$ to be an arbitrary vertex of $\oC$.

Next assume that $V(H') = \{x\}$ for some $x$. If $xt$ is not an edge of $H$ then we can put $g(t)$ to be any vertex of $\oC$ such that $\lvert \arg(g(t)/x)\rvert > \frac{2\pi}{3}$. If $xt$ is an edge of $H$, let $v$ be any vertex of $\oC$ with $\lvert \arg(v/x)\rvert < \frac{2\pi}{3}$. If $\soc(xv) = \tau(xt)$ then we can put $g(t) = v$, otherwise we can use Lemma~\ref{lem:cswitching} with $X = \{x,v\}$ and $S = \{v\}$ to get the desired embedding.

So $V(H') = \{x,y\}$ for some $x$ and $y$. As $H$ contains at least one edge, it is easy to see that there is $v'\in V(\oC)$ such that the map $g'$ with $g'(x) = x$, $g'(y) = y$, and $g'(t) = v'$ is an embedding $G\to \oC$. If $g'$ is a label-preserving embedding, then we are done. So assume that $g'$ is not label-preserving.
If at most one of $xv'$ and $yv'$ is an edge of $\oC$ then we can use Lemma~\ref{lem:cswitching} with $X = \{x,y,v'\}$ and $S=\{v'\}$ to get the desired embedding. So we consider the case when both $xv'$ and $yv'$ are edges of $\oC$.
To conclude the proof, we distinguish two cases depending on whether $xy\in E(\oC)$.
If $xy$ is an edge of $\oC$, then we know that 
$$0 = \soc(xy) + \soc(xv') + \soc(yv') = \tau(xy) + \tau(xt) + \tau(yt),$$
which together with $\soc(xy) = \tau(xy)$ implies that 
$$\soc(xv') + \soc(yv') = \tau(xt) + \tau(yt).$$
Consequently, we can again use Lemma~\ref{lem:cswitching} with $X = \{x,y,v'\}$ and $S=\{v'\}$ to get the desired embedding.

So $xy$ is not an edge of $\oC$. Note that there are $z,w \in V(\oC)$ such that $zw\notin E(\oC)$ and $\oC$ induces a 4-cycle on $\{x,y,z,w\}$ (we can pick $z$ and $w$ as the midpoints of the two arcs with endpoints $x,y$). Hence, the following equality holds
$$\soc(xz) + \soc(yz) + \soc(xw) + \soc(yw) = 1,$$
or equivalently,
$$\soc(xz) + \soc(yz) \neq \soc(xw) + \soc(yw).$$
This means that one of $z$ and $w$ can be chosen as $g(t)$, and we get an embedding with the claimed properties.
\end{proof}

\section{Relation algebras}
\label{sec:ra}
The original motivation for this paper comes from the field of relation algebras, which are certain expansions of Boolean algebras which, in a sense, capture the behaviour of certain structures in binary languages. We remark that (integral) relation algebras have also been discovered and studied with a different formalism and under the different name of \emph{hypergroups}~\cite{Hypergroups} (also called \emph{polygroups}~\cite{ComerChromaticPolygroups}). In this section we only introduce the basic definitions, for more details and examples, see e.g.~\cite{Maddux2006-dp,Bodirsky2025RA}.

\begin{definition}\label{def:relation_algebra}
    A \emph{relation algebra} is an algebra $\bA$ with domain $A$ and signature $\{\sqcup, \bar{\phantom{o}}, \bot,\allowbreak \top,\id, \breve{\phantom{o}}, \circ \}$ such that
    \begin{enumerate}
        \item the structure $(A; \sqcup, \sqcap, \bar{\phantom{o}}, \bot,\top)$, with $\sqcap$ defined by $x\sqcap y := \overline{(\bar{x}\sqcup \bar{y})}$, is a Boolean algebra;
        \item $\circ$ is an associative binary operation on A, called \emph{composition};
        \item for all $a,b,c \in A$: $(a\sqcup b)\circ c= (a\circ c) \sqcup (b\circ c)$;
        \item \label{itemfour} for all $a\in A$: $a\circ \id =a$;
        \item for all $a\in A$: $\Breve{\Breve{a}} = a$;
        \item for all $a,b\in A$: $(a \sqcup b){\Breve{\phantom{o}}} =\Breve{a}\sqcup \Breve {b}$;
        \item for all $a,b\in A$: $(a \circ b){\Breve{\phantom{o}}}=\Breve{b} \circ \Breve{a}$;
        \item for all $a,b \in A$: $\bar{b} \sqcup \left(\Breve{a} \circ  \overline{(a\circ b)  }\right) =\bar{b}$. 
    \end{enumerate}
 \end{definition}

If $\bA$ is a relation algebra with elements $a$ and $b$, then we write $a \leq b$ if $a \sqcap b = a$ holds in $\bA$. 
 Clearly, $\leq$ defines a partial order on $A$. An element $b \in A \setminus \{\bot^\bA\}$ is called an \emph{atom} if there is no element $a \in A \setminus \{\bot^\bA, b\}$ with $a \leq b$. The set of all atoms is denoted by $A_0$.

Let $\bA$ be a relation algebra. An element $a\in A$ is called \emph{symmetric} if $\Breve{a} = a$, and $\bA$ is called \emph{symmetric} if every element of $\bA$ is symmetric.
For a finite relation algebra $\bA$, the operation $\circ$ is completely determined by its restriction to the atoms. A tuple $(x,y,z) \in (A_0)^3$ is called an \emph{allowed triple} if $z \leq x \circ y$. Otherwise, $(x,y,z)$ is called a \emph{forbidden triple}. One can show that if $(x, y, z)$ is an allowed triple, then also $(\breve{x},z,y), $$(z,\breve{y},x),$$ (\breve{z},x,\breve{y}), $$(y, \breve{z}, \breve{x}),$ and $(\breve{y}, \breve{x}, \breve{z})$ are (see \cite[Theorem 294]{Maddux2006-dp}). In particular, for symmetric relation algebras, the allowed triples are invariant under permutations.

If $R_1,R_2 \subseteq B^2$ are two binary relations, then
$R_1 \circ R_2$ denotes the \emph{composition}
\begin{align*}
R_1 \circ R_2 := \{(x,z) \in B^2 \mid \text{ there exists } y \in B \text{ with } (x,y) \in R_1 \text{ and } (y,z) \in R_2 \}.
\end{align*}
Relation algebras are a useful formalism for capturing how certain sets of binary relations on the same domain interact with each other:
\begin{definition}\label{defn:repr}
Let $\bA = (A; \sqcup, \bar{\phantom{o}}, \bot,\top,\id,  \Breve{\phantom{o}}, \circ)$ be a relation algebra. A structure $\fB$ with signature $A$
is called a \emph{representation of $\bA$} if  
\begin{enumerate}
    \item\label{ax:repr:1} $\bot^{\fB} = \varnothing$; 
    \item $\top^{\fB} = \bigcup_{a \in A} a^{\fB}$; 
    \item $\id^{\fB} = \{(u,u) \mid u \in B\}$;
    \item for all $a \in A$ we have $({\overline{a}})^{\fB} = \top^{\fB} \setminus a^{\fB}$;
    \item for all $a \in A$ we have $(\breve{a})^{\fB} = \{(u,v) \mid (v,u) \in a^{\fB} \}$;
    \item\label{ax:repr:6} for all $a,b \in A$ we have $a^{\fB} \cup b^{\fB} = (a \sqcup b)^{\fB}$; 
    \item\label{ax:repr:7} for all $a,b \in A$ we have $a^{\fB} \circ b^{\fB} = (a \circ b)^{\fB}$.
\end{enumerate}
Relation algebras that have a representation are called \emph{representable}.

We say that $\fB$ is \emph{square} if $\top^{\fB} = \fB^2$ (or, in other words, every pair of vertices is contained in some relation); and it is \emph{finitely bounded} if there is a finite set $\mathcal F$ of finite structures with signature $A$ such that a structure $\mathfrak C$ with signature $A$ embeds into $\fB$ if and only if no member of $\mathcal F$ embeds into $\mathfrak C$. 
\end{definition}

\begin{definition}
If $\bA$ is a relation algebra, then an \emph{$\bA$-network} $(V,f)$ consists of a finite set of variables $V$ and a function $f \colon V^2 \to A$  (see, e.g., \cite[Section 1.5.3]{BodirskyBook}). If $\fB$ is a representation of $\bA$, then $(V,f)$ is called \emph{satisfiable in $\fB$} if there exists a function $s \colon V \to B$ such that for all $x,y \in V$ we have 
$(s(x),s(y)) \in f(x,y)^{\fB}$. 
An $\bA$-network $(V,f)$ is called 
\begin{itemize}
    \item \emph{atomic} if for all $x,y \in V$ we have that $f(x,y)$ is an atom in $\bA$;
    \item \emph{consistent} if 
     for all $x,y,z \in V$ we have
     \begin{align*}
         f(x,y) \leq f(x,z) \circ f(z,y) \text{ and } f(x,x) \leq \id ;
     \end{align*}
     \item \emph{satisfiable} if it is satisfiable in some representation of $\bA$.
\end{itemize}
\end{definition}

\begin{definition}
The \emph{network satisfaction problem} for a fixed finite relation algebra $\bA$, denoted by $\NSP(\bA)$, is the following computational problem: 
The input consists of an $\bA$-network $(V,f)$. The task is to decide whether $(V,f)$ is satisfiable.

A representation $\fB$ of a relation algebra $\bA$ is called \emph{universal} if every satisfiable 
$\bA$-network is satisfiable in $\fB$.
\end{definition}

\section{The Relation Algebra \texorpdfstring{$56_{65}$}{56\textunderscore 65}}\label{sec:ra56}
In~\cite{Bodirsky2025RA}, Bodirsky, Jahn, Konečný, Kn\"auer, and Winkler systematically studied network satisfaction problems and universal representations for relation algebras with at most four atoms. Out of over a hundred examples, there is only one algebra (called $\ra{56}{65}$ following the terminology of Maddux~\cite{Maddux2006-dp}) for which containment of its $\NSP$ in \NP{} and the existence of a finitely bounded universal representation was left open. In this section we introduce the relation algebra $\ra{56}{65}$ and prove Theorem~\ref{thm:main_ra}, thereby answering~\cite[Question~4.29]{Bodirsky2025RA} and concluding the classification of universal representations, and of the computational complexity of $\NSP$ for relation algebras with at most four atoms.

\begin{definition}[The relation algebra $\ra{56}{65}$]
The relation algebra $\ra{56}{65}$ is the unique symmetric relation algebra $\bA$ with four atoms $A_0 = \{\id, N, 0, 1\}$ and the following set of forbidden triples:
\begin{align*}
    &\{ (N,N,N), (1,1,1), (0,0,1), (0,1,0), (1,0,0)\} \cup \\
    &\{(\id, X, Y), (X, \id, Y), (X, Y, \id) : X\neq Y \in A_0\}.
\end{align*}
(Note that the second row only ensures that $\id$ is a congruence.)
\end{definition}

The following fact, which follows by unwinding the definitions, explains our choice to name the atoms $N$, $0$, and $1$. We say that a signed graph $(G,\sigma)$ is \emph{consistent} if $G$ is $\overbar{K_3}$-free and for every triangle $T$ in $G$ it holds that $\sigma(T) = 0$.
\begin{fact}\label{fact:repr}\leavevmode
\begin{enumerate}
    \item \label{fact:repr:1}
Let $\fB$ be a representation of $\ra{56}{65}$. Define a signed graph $(G,\sigma)$ with $V(G) = B$ such that $\{x,y\} \in E(G)$ if and only if $(x,y) \in 0^\fB \cup 1^\fB$, and $\sigma(\{x,y\}) = 0$ if and only if $(x,y)\in 0^\fB$. Then $(G,\sigma)$ is consistent and has the following two properties:
    \begin{enumerate}
        \item\label{fact:repr:cond1} $G$ contains a non-edge, as well as edges $e_0$ and $e_1$ with $\sigma(e_i) = i$ for $i\in \{0,1\}$.
        \item\label{fact:repr:cond2} $(G,\sigma)$ has the $3$-extension property.
    \end{enumerate}
    \item \label{fact:repr:2}
Conversely, given a consistent graph $G$ with labelling $\sigma$ satisfying (\ref{fact:repr:cond1}) and (\ref{fact:repr:cond2}), one can define a structure $\fB$ with domain $V(G)$ whose signature are the elements of $\ra{56}{65}$ 
and the relations are defined as follows:
    \begin{enumerate}
        \item $(x,y)\in N^\fB$ if and only if $x\neq y$ and $\{x,y\}$ is not an edge of $G$,
        \item $(x,y)\in 0^\fB$ if and only if $x\neq y$ and $\{x,y\}$ is an edge of $G$ with label 0, 
        \item $(x,y)\in 1^\fB$ if and only if $x\neq y$ and $\{x,y\}$ is an edge of $G$ with label 1, and
        \item all the other relations are defined uniquely from $N^\fB$, $0^\fB$, and $1^\fB$ according to axioms (\ref{ax:repr:1})--(\ref{ax:repr:6}) of Definition~\ref{defn:repr}.
    \end{enumerate}
Then $\fB$ is a representation of $\ra{56}{65}$.
\end{enumerate}
\end{fact}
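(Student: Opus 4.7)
The proof proceeds by unpacking the translation between $\ra{56}{65}$-representations and signed graphs in both directions, matching each forbidden or allowed triple of atoms against a combinatorial condition on the signed graph.

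For part (1), given a representation $\fB$, consistency of $(G,\sigma)$ is verified by running through the forbidden triples on $\{N, 0, 1\}$: the triple $(N,N,N)$ together with axiom~(7) of Definition~\ref{defn:repr} forbids three pairwise $N$-related vertices, so $G$ is $\overbar{K_3}$-free; the remaining four forbidden triples on $\{0,1\}$ are precisely those whose labels sum to $1$ in $\Z_2$, so they rule out triangles of $\sigma$-sum $1$. Property (a) is the non-degeneracy of $\fB$ (distinct atoms having distinct, non-empty interpretations). For property (b), the key case is extending a $2$-vertex embedding $f$ to a triangle $V(H) = \{x,y,t\}$ with $V(H') = \{x,y\}$: letting $a$, $b$, $c$ be the atoms associated with $\tau(xt)$, $\tau(yt)$, $\tau(xy)$ (with non-edges mapped to $N$), the anti-even-balancing condition $\tau(xy) + \tau(xt) + \tau(yt) = 0$ is exactly the statement that $(a,b,c)$ is an allowed triple of $\bA$, i.e., $c \leq a \circ b$. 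Axiom~(7) then produces a vertex $v$ with $(f(x), v) \in a^\fB$ and $(v, f(y)) \in b^\fB$, as required. Cases with smaller $H'$ are handled by property (a) together with the reflexivity of $\id^\fB$.

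For part (2), given consistent $(G, \sigma)$ with properties (a) and (b), the relations $N^\fB$, $0^\fB$, $1^\fB$, $\id^\fB$ form a partition of $B^2$ by construction, and axioms (1)--(6) of Definition~\ref{defn:repr} follow immediately from the definition together with the symmetry of $\bA$. The substance lies in axiom~(7), $a^\fB \circ b^\fB = (a \circ b)^\fB$, which by Boolean additivity it suffices to check on pairs of atoms. The inclusion $a^\fB \circ b^\fB \subseteq (a \circ b)^\fB$ rules out forbidden triples: a realized forbidden triple $(a,b,c)$ would produce either a $\overbar{K_3}$ in $G$ (for $(N,N,N)$), a triangle of $\sigma$-sum $1$ (for the other four), or it would violate the disjointness of $\id^\fB$ from the remaining atoms---all contradicting consistency. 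The reverse inclusion $\supseteq$ is where the $3$-extension property enters: given an allowed triple $(a,b,c)$ and a pair $(u,w) \in c^\fB$, property (b) applied to the triangle $H$ with labels prescribed by $(a,b,c)$ and to the $2$-vertex embedding $\{u,w\} \hookrightarrow (G,\sigma)$ provides the intermediate vertex $v$. Property (a) dispatches the remaining cases, notably those involving $\id$.

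The main obstacle, and the bulk of the bookkeeping, is ensuring that the case analysis for axiom~(7) is complete. While no individual verification is deep, one must track the interplay between the five forbidden triples on $\{N,0,1\}$, the identity-involving forbidden triples, and the corresponding allowed triples; the symmetry of $\bA$, under which allowed triples are invariant under permutations of atoms, is what keeps this manageable.
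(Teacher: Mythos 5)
Your proof is correct and matches the paper's intent exactly: the paper states this as a fact that ``follows by unwinding the definitions'' and supplies no further argument, and your unwinding --- the four forbidden triples of $\Z_2$-sum $1$ versus triangles of sign $1$, the triple $(N,N,N)$ versus $\overbar{K_3}$-freeness, and the two inclusions of the composition axiom versus consistency and the $3$-extension property respectively --- is precisely the intended one. The only caveats (that the definition of the $3$-extension property should implicitly exclude $H=\overbar{K_3}$, and that part~(1) tacitly treats representations as square) are imprecisions in the paper's statement rather than gaps in your argument.
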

Analogously, there is a correspondence between consistent atomic $\ra{56}{65}$-networks where $\id$ is the equality, and finite consistent signed graphs
(thus justifying the name ``consistent'' above). Similarly, we say that a finite signed graph $(G,\sigma)$ is \emph{satisfiable} if the corresponding $\ra{56}{65}$-network is.

\begin{prop}\label{prop:56_balancing}
If $(H,\tau)$ is a satisfiable signed graph then $\tau$ is anti-even-balancing.
\end{prop}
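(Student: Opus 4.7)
The plan is to reduce to a structural statement about the signed graph underlying a representation. By Fact~\ref{fact:repr}, satisfiability of $(H,\tau)$ means that $(H,\tau)$ embeds as a signed induced subgraph of some consistent signed graph $(G,\sigma)$ with the $3$-extension property; since induced cycles of $H$ remain induced cycles of $G$ with the same labels, it suffices to show that every such $(G,\sigma)$ is itself anti-even-balancing. The triangle case is immediate from consistency, so the real content is: every induced cycle of $G$ of length $n \geq 4$ has $\sigma$-sum $1$.

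To prove this, I fix an induced cycle $C = v_0 v_1 \cdots v_{n-1}$ in $G$ with labels $a_i = \sigma(v_i v_{i+1})$ (indices mod~$n$), and exploit the non-edge $\{v_0, v_2\}$. By the $3$-extension property applied to this non-edge, for every choice $(\ell_0,\ell_2) \in \{0,1\}^2$ there exists $w \in V(G)$ with $\sigma(w v_0) = \ell_0$ and $\sigma(w v_2) = \ell_2$; the prescribed $3$-vertex graph is a path, so it is vacuously anti-even-balancing. I then split according to whether $w v_1$ is an edge of $G$. If it is, the triangles $\{w,v_0,v_1\}$ and $\{w,v_1,v_2\}$ must have $\sigma$-sum $0$ by consistency, which forces $\ell_0 + \ell_2 = a_0 + a_1$. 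If $w v_1$ is a non-edge, then for each $j \in \{3,\ldots,n-1\}$ the pair $v_1 v_j$ is also a non-edge (the cycle is induced), so $\overbar{K_3}$-freeness of $G$ (a consequence of $(N,N,N)$ being a forbidden triple) forces $w v_j$ to be an edge. Telescoping the resulting triangle identities $\sigma(w v_{j+1}) = \sigma(w v_j) + a_j$ along the arc $v_2 v_3 \cdots v_{n-1} v_0$ and comparing $\sigma(w v_0)$ with the prescribed value $\ell_0$ then gives $\ell_0 + \ell_2 = a_2 + a_3 + \cdots + a_{n-1}$.

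Combining the two cases, for every $(\ell_0,\ell_2) \in \{0,1\}^2$ the element $\ell_0 + \ell_2 \in \Z_2$ lies in $\{a_0 + a_1,\, a_2 + \cdots + a_{n-1}\}$. Since $\ell_0 + \ell_2$ attains both values of $\Z_2$ as $(\ell_0,\ell_2)$ varies over $\{0,1\}^2$, this set must be all of $\Z_2$, that is, $a_0 + a_1 \neq a_2 + \cdots + a_{n-1}$. This is equivalent to $\sum_{i=0}^{n-1} a_i = 1$, as required. The step needing the most care is the non-edge case: one must verify both that every $w v_j$ with $j \in \{3,\ldots,n-1\}$ is genuinely forced to be an edge (by chasing all the $\overbar{K_3}$-constraints involving $v_1$) and that the triangle telescoping closes up correctly around the cycle; but once set up, these are routine.
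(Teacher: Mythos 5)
Your proof is correct and follows essentially the same route as the paper's: reduce via Fact~\ref{fact:repr} to the ambient consistent signed graph with the $3$-extension property, adjoin a vertex over the non-edge $\{v_0,v_2\}$ with prescribed labels, use $\overbar{K_3}$-freeness to force its adjacencies to the rest of the cycle, and telescope the triangle identities. The only (cosmetic) differences are that you treat all cycle lengths $n\geq 4$ uniformly and run a parity argument over all four label choices, whereas the paper handles $n=4$ and $n=5$ separately and directly picks the single choice $(\sigma(ab),\,1+\sigma(bc))$ that forces the non-edge branch; both versions silently skip the harmless degenerate case where the new vertex coincides with a cycle vertex.
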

\begin{proof}
By definition, $(H,\tau)$ is satisfiable if and only if its corresponding $\ra{56}{65}$-network $(H,f)$ is. This means that there is a representation $\fB$ of $\ra{56}{65}$ in which $(H,f)$ is satisfiable. Next, use part~(\ref{fact:repr:1}) of Fact~\ref{fact:repr}
to construct a signed graph $(G,\sigma)$ from $\fB$. It follows that there is a label-preserving embedding from $(H,\tau)$ into $(G,\sigma)$; without loss of generality we may assume that the identity is such an embedding. In other words, 
we assume that $H$ is an induced subgraph of $G$ and $\sigma|_H = \tau$. It thus only remains to compute the signs of induced cycles in $H$. Let $C$ be some induced cycle in $G$. If $\lvert V(C)\rvert = 3$ then we know that $\sigma(C) = 0$ by consistency. Observe that $H$ contains no induced cycles on more than 5 vertices as these contain $\overbar{K_3}$ as an induced subgraph, so $\lvert V(C)\rvert \leq 5$.

\medskip

If $\lvert V(C)\rvert = 4$, put $V(C) = \{a,b,c,d\}$ (such that the edges go in this cyclic order). Let $(H',\rho)$ be the signed graph such that $V(H') = \{a,c,t\}$, $E(H') = \{at, ct\}$, and $\rho(at) = \sigma(ab)$ and $\rho(ct) = 1 + \sigma(bc)$. By the $3$-extension property of $(G,\sigma)$, there is a vertex $e\in V(G)$ with $\sigma(ae) = \sigma(ab)$ and $\sigma(ce) = 1 + \sigma(bc)$.

Now, if $be$ is an edge of $G$ then by consistency we know that 
$$\sigma(be) = \sigma(ab) + \sigma(ae) + \sigma(be) = 0,$$
but at the same time 
$$1 + \sigma(be) = \sigma(bc) + \sigma(be) + \sigma(ce) = 0,$$
a contradiction. So $be$ is not an edge of $B$. But then $de$ is an edge of $B$ (as otherwise $\{b,d,e\}$ would induce $\overbar{K_3}$), and thus
$$\sigma(cd) + \sigma(ce) + \sigma(de) = 0,$$
and
$$\sigma(ad) + \sigma(ae) + \sigma(de) = 0.$$
Summing these up and using
$\sigma(ae) = \sigma(ab)$ and $\sigma(ce) = 1 + \sigma(bc)$,
we obtain $\sigma(C) = 1$.

\medskip

Finally, if $\lvert V(C)\rvert = 5$, put $V(C) = \{a,b,c,d,e\}$ (such that the edges go in this cyclic order). Similarly as above, we can obtain a vertex $f\in V(G)$ with $\sigma(af) = \sigma(ab)$ and $\sigma(cf) = 1+\sigma(bc)$.

As above, we infer that $bf$ is not an edge of $G$, and consequently both $df$ and $ef$ are edges of $G$. By consistency we have the following three equalities:
\begin{align*}
    0 &= \sigma(af) + \sigma(ae) + \sigma(ef)\\
        &= \sigma(ab) + \sigma(ae) + \sigma(ef),\\
    0 &= \sigma(cf) + \sigma(cd) + \sigma(df) \\
        &= 1+\sigma(bc) + \sigma(cd) + \sigma(df), \text{ and}\\
    0 &= \sigma(de) + \sigma(df) + \sigma(ef).
\end{align*}
Summing them up, we get $\sigma(C) = 1$.
\end{proof}

It is easy to see that $(\oC, \soc)$ is consistent and satisfies property~(\ref{fact:repr:cond1}) of Fact~\ref{fact:repr}. Lemma~\ref{lem:extension} says that it also satisfies property~(\ref{fact:repr:cond2}). This means that we can finally prove Theorem~\ref{thm:main_ra}.

\begin{proof}[Proof of Theorem~\ref{thm:main_ra}]
Let $\fB$ be the representation obtained from $(\oC,\soc)$ using part~\ref{fact:repr:2} of Fact~\ref{fact:repr}. Clearly, it is square. It is also finitely bounded, as $\oC$ is finitely bounded (see Theorem~12 and Section~7 of~\cite{Bodirsky2024Circular}) and in addition to these bounds, one only has to forbid labelled cycles of lengths 3, 4, and 5 where the labelling is not anti-even-balancing, and add finitely many forbidden structures ensuring that axioms (\ref{ax:repr:1})--(\ref{ax:repr:6}) of Definition~\ref{defn:repr} are satisfied.

In order to see that it is universal, let $(V,f)$ be a satisfiable $\ra{56}{65}$-network. Without loss of generality we can assume that $(V,f)$ is consistent, atomic, and that $f(x,y) = \id$ if and only if $x = y$: Since $(V,f)$ is satisfiable, it is satisfiable in some representation. Its image in this representation is consistent, atomic, and the identity corresponds to equality. 

This means $(V,f)$ corresponds to some satisfiable signed graph $(H,\tau)$, and by Proposition~\ref{prop:56_balancing} we know that $\tau$ is anti-even-balancing. Finally, Theorem~\ref{thm:main} implies that $(V,f)$ is indeed satisfiable in $(\oC,\soc)$.

The ``consequently'' part is exactly the content of~\cite[Lemma~3.5]{Bodirsky2025RA}.
\end{proof}
We remark that a (non-universal) representation of $\ra{56}{65}$ has been found by Lukács; see~\cite[Section~4.7.3]{Bodirsky2025RA}.

\section{Conclusion}
A (countable) structure is \emph{homogeneous} if every isomorphism between finite substructures extends to an automorphism of the whole structure.\footnote{For us, all structures will be countable, that is, finite or countably infinite.} Homogeneous structures are one of the cornerstones of model theory, see e.g.~\cite{Macpherson2011}. Given structures $A$ and $B$ on the same domain such that we can obtain $A$ from $B$ by forgetting some relations, we say that $A$ is a \emph{reduct} of $B$, and that $B$ an \emph{expansion} of $A$. We say that $B$ is a \emph{first-order expansion of $A$} if all relations of $B$ are first-order definable in $A$. We say that $A$ is a \emph{first-order reduct of $B$} if $A$ is a reduct of a first-order expansion of $B$. Following Covington~\cite{Covington} (see also~\cite{Macpherson2011}), we say that 
$B$ is a \emph{homogenization of $A$} if $B$ is homogeneous and has a finite relational signature and $A$ and $B$ are first-order reducts of each other. An $\omega$-categorical structure which has a homogenization is called \emph{homogenizable}.

In~\cite{Bodirsky2024Circular}, Bodirsky and Guzm\'an-Pro proved that $\C$ is homogenizable by adding the quaternary separation relation on the unit circle and a local variant of the (ternary) betweenness relation. One can further add constants for members of $P$
and a unary relation for $C_0$ (see Definition~\ref{defn:sigmac}), so that $\soc$ is then first-order definable in the expanded structure, and 
therefore $(\oC,\soc)$ has an expansion in a finite relational signature which is homogeneous.
However, this expansion is likely not first-order (for example, it is unlikely that $\Aut(\oC,\soc)$ fixes $P$ pointwise), and we thus ask:
\begin{question}\label{q:homogenization}
For which choices of $P$ is $(\oC,\soc)$ homogenizable?
\end{question}
It follows from the homogenization of $\C$ from~\cite{Bodirsky2024Circular} and the fact that $C_0$ and $C_1$ are dense that when solving Question~\ref{q:homogenization}, one only has to consider which members of $P$ belong to $C_0$ and $C_1$ respectively (see Definition~\ref{q:homogenization}).

Once homogenizability is known, the following question becomes interesting:
\begin{question}
What are the optimal Ramsey expansion of the homogenizations of $(\oC,\soc)$ for different choices of $P$?
\end{question}
See for example the recent survey~\cite{Hubicka2025survey} for definitions and a review of structural Ramsey theory.

\medskip

As we have seen above, 
both $(\oC, \soc)$ and the corresponding representation $\fB$ of the relation algebra $\ra{56}{65}$ are reducts of a finitely bounded homogeneous structure,  and 
consequently, they fall into the scope of the Bodirsky--Pinsker conjecture in the area of infinite domain constraint satisfaction problems~\cite{BPP-projective-homomorphisms}.

\begin{theorem}
    The representation $\fB$ constructed in the proof of Theorem~\ref{thm:main_ra} pp-constructs $K_3$.
\end{theorem}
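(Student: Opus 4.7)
The plan is to construct a primitive positive interpretation of $K_3$ in $\fB$. The first observation is that the atom $N$ of $\ra{56}{65}$ is, by the construction of $\fB$ in Fact~\ref{fact:repr}, interpreted as the set of non-edges of $\oC$, i.e., as the edge relation of the generic circular triangle-free graph $\C$. Hence the pp-definable reduct of $\fB$ to the signature $\{\id, N\}$ coincides (as a relational structure) with $\C$, and any pp-construction of $K_3$ in $\C$ lifts to a pp-construction in $\fB$. It therefore suffices to show that $\C$ pp-constructs $K_3$.

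For this, the strategy is a gadget reduction from a known NP-hard problem, most naturally $3$-coloring or the homomorphism problem to $K_3$ itself. Given an input graph $G$, one aims to build a graph $G'$ in the class of $\{K_3, 2K_2 + K_1, C_5 + K_1, C_6\}$-free graphs (which embeds into $\C$ by Theorem~\ref{thm:santiago}) such that embeddings $G' \hookrightarrow \C$ are in bijective correspondence with $3$-colorings of $G$. A natural design uses the fixed $5$-cycle $P = \{p^0, \ldots, p^4\}$ as a color palette, attaching each vertex of $G$ via a gadget that forces its image into one of three ``types'' of regions in $\C$, with edges of $G$ encoded by further gadgets that forbid matching types at endpoints.

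The main obstacle is the combinatorial design of these gadgets under the tight subgraph-forbidding constraints of $\C$: both the $K_3$- and $2K_2 + K_1$-freeness sharply limit the available connections, and one must ensure no ``extra'' embeddings corresponding to spurious colorings. Additional leverage is provided by the $\mathbb Z_2$-labelling $\soc$, which is retained in the full structure $\fB$: the forbidden triples $(1,1,1)$ and $(0,0,1)$ etc.\ of $\ra{56}{65}$ translate into anti-even-balancing constraints on triangles, and Proposition~\ref{prop:56_balancing} gives balancing constraints on $4$- and $5$-cycles. If a clean reduction in $\C$ alone proves elusive, these parity constraints in $\fB$ can be invoked to tighten the gadget and enforce the required rigidity of the color encoding, exploiting the fact that distinct anti-even-balancing patterns on a small cycle cannot coexist under the composition axioms of $\ra{56}{65}$.
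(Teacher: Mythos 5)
Your first step is exactly the paper's: since $N^\fB$ is the set of non-edges of $\oC$, the structure $(B,N^\fB)$ is isomorphic to $\C$, so it suffices to pp-construct $K_3$ in $\C$. Up to that point you and the paper agree.

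The gap is in the second half: you never actually establish that $\C$ pp-constructs $K_3$. The paper does this by citing an existing result (Theorem~45 of~\cite{Bodirsky2024Circular}), which is a genuine, nontrivial prior theorem; your proposal instead sketches a ``plan'' for a gadget reduction from $3$-colouring and explicitly leaves the gadgets unconstructed (``the main obstacle is the combinatorial design of these gadgets,'' ``if a clean reduction in $\C$ alone proves elusive\dots''). That is not a proof. Moreover, the sketch conflates two different notions: a pp-construction of $K_3$ is an algebraic statement (a chain of pp-powers and homomorphic equivalences ending in $K_3$), not a polynomial-time reduction from $3$-colouring to $\mathrm{CSP}(\C)$. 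While a pp-construction of $K_3$ \emph{implies} NP-hardness of the CSP, designing an instance-by-instance gadget reduction does not by itself yield a pp-construction, so even a completed version of your plan would prove a weaker statement than the one claimed. Finally, your fallback of invoking the labelling $\soc$ and the forbidden triples of $\ra{56}{65}$ is unnecessary for the intended argument (the whole point of the reduction to $\C$ is that the labels are discarded) and is too vague to repair the missing step. To fix the proof, either cite the known pp-construction of $K_3$ in $\C$ from~\cite{Bodirsky2024Circular}, or actually exhibit a concrete pp-interpretation, neither of which your proposal does.
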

\begin{proof}
    Observe that $(B, N^\fB)$ is isomorphic to $\C$.
    The rest follows from~\cite[Theorem~45]{Bodirsky2024Circular} which gives a pp-construction of $K_3$ in $\C$.
\end{proof}

The situation is less clear for $(\oC,\soc)$ (seen as a 2-edge-coloured graph, or equivalently, as a structure with two binary relations corresponding to edges with label 0 and 1 respectively). In fact, we do not know what the complexity of its CSP is.

\begin{question}\label{q:cspgraph}
    What is the complexity of $\mathrm{CSP}(\oC,\soc)$?
\end{question}

Let $G_0$ be the graph whose edges are exactly the edges of $(\oC,\soc)$ with label $0$, and define $G_1$ analogously. These are two natural graphs, and it would be interesting to see if there is a simple description of finite graphs which embed into $G_0$ and $G_1$ respectively. From the point of view of constraint satisfaction problems, $G_0$ contains an infinite clique, and hence $\mathrm{CSP}(G_0)$ is trivial, but $\mathrm{CSP}(G_1)$ seems to be an interesting problem. In particular, if it turns out to be NP-hard then it also answers Question~\ref{q:cspgraph}.

\section*{Acknowledgments}
We are grateful to Tom Zaslavsky for generously sharing his deep knowledge of sign graphs and pointing us towards Truemper's theorem.

\bibliography{56.bib}

\end{document}